\documentclass[10pt]{article}%{amsart}
\usepackage{mathrsfs}
\usepackage{amsmath,amssymb}
\usepackage{amsthm}
\textheight 230mm \textwidth 155mm \topmargin -8mm
\oddsidemargin 3mm \evensidemargin 3mm
\usepackage[colorlinks,linkcolor=red,anchorcolor=blue,citecolor=green]{hyperref}
\usepackage{graphics,graphicx}
\usepackage{color}
\usepackage{enumerate}
\usepackage{appendix}
\allowdisplaybreaks

\usepackage{tikz}

%\usepackage{showkeys}
% \usepackage{refcheck}

%\parskip 8pt

%%%%%%%%format of start %%%%%%%

\newtheorem{thm}{Theorem}[section]
\newtheorem{lem}[thm]{Lemma}

\newtheorem{remark}[thm]{Remark}

\def\qed{\hfill \rule{4pt}{7pt}}

\numberwithin{equation}{section}

%% define new symbols
\def\R{{\mathbb{R}}}

\def\P{{\mathbb{P}}}

\def\Z{{\mathbb{Z}}}
\def\N{{\mathbb{N}}}

\def\F{{\mathfrak{F}}}

\def\sF{{\mathcal{F}}}

\def\sP{{\mathcal{P}}}

\def\sS{{\mathcal{S}}}

\def\sX{{\mathcal{X}}}

\def\bfe{{\mathbf{e}}}

\def\FSF{{\mathsf{FSF}}}
\def\WSF{{\mathsf{WSF}}}
\def\USF{{\mathsf{USF}}}
\def\UST{{\mathsf{UST}}}
\def\RW{{\mathrm{RW}}}

\usepackage{comment}
\usepackage[normalem]{ulem}
\definecolor{lw}{RGB}{0,0,255}

%
%\documentclass[10pt]{article}%{amsart}
%\usepackage{mathrsfs}
%\usepackage{stmaryrd}
%\usepackage{amsfonts,amsmath,amssymb,amscd}
%\textheight 230mm \textwidth 155mm \topmargin -14mm
%\oddsidemargin 3mm \evensidemargin 3mm
%\usepackage[colorlinks,linkcolor=red,anchorcolor=blue,citecolor=green]{hyperref}
%\usepackage{shadow}
%\usepackage{graphics,graphicx}
%\usepackage{color}
%\usepackage{enumerate}
%\usepackage{longtable}
%\usepackage{appendix}
%\usepackage{ctex}
%\allowdisplaybreaks
%
%
%%\parskip 8pt
%
%%%%%%%%%format of start %%%%%%%
%
%\newtheorem{thm}{Theorem}[section]
%\newtheorem{lem}[thm]{Lemma}
%\newtheorem{prop}[thm]{Proposition}
%\newtheorem{cor}[thm]{Corollary}
%\newtheorem{defi}[thm]{Definition}
%\newtheorem{remark}[thm]{Remark}
%\newtheorem{example}[thm]{Example}
%\newtheorem{problem}[thm]{Problem}
%\newtheorem{conj}[thm]{Conjecture}
%\newtheorem{obs}[thm]{Observation}
%\newtheorem{con}[thm]{Convention}
%\newtheorem{ass}[thm]{Assumption}
%\setcounter{section}{0}
%
%\def\qed{\hfill \rule{4pt}{7pt}}
%
%\def\pf{\noindent {\it{Proof.}\hskip 2pt}}
%
%\renewcommand{\thefootnote}{\fnsymbol{footnote}}
%\setcounter{footnote}{-1}
%
%%\pagestyle{plain}

\begin{document}
\begin{center}
{{\large\bf Uniform spanning forests associated with biased random walks\\ on Euclidean lattices}\footnote{The project is supported partially by CNNSF (No.~11671216).}}
\end{center}
\vskip 2mm
\begin{center}
% Zhan Shi, Vladas Sidoravicius, He Song, Longmin Wang \& Kainan Xiang
Z.
%han
 Shi, V.
 %ladas
  Sidoravicius, H.
  %e 
  Song, L. Wang,  K. Xiang
%Shi Zhan$^a$, Sidoravicius Vladas$^b$, Song He$^c$, Wang Longmin$^c$ and Xiang Kainan$^c$
%\vskip 1mm
%\footnotesize{$^a$LPMA, Universit\'{e} Paris VI, 4 place Jussieu, F-75252 Paris Cedex 05, France}\\
%\footnotesize{$^b$Courant Institute of Mathematical Sciences, New York, NY 10012, USA}\\
%\footnotesize{\& NYU-ECNU Institute of Mathematical Sciences at NYU Shanghai}\\
%\footnotesize{$^c$School of Mathematical Sciences, LPMC, Nankai University}\\
%\footnotesize{Tianjin City, 300071, P. R. China}\\
%\footnotesize{Emails: zhan.shi@upmc.fr} (Shi)\\
%\footnotesize{~~~~~vs1138@nyu.edu} (Sidoravicius)\\
%\footnotesize{~~~~~~~~~~songhe@mail.nankai.edu.cn} (Song)\\
%\footnotesize{~~~~~~wanglm@nankai.edu.cn} (Wang)\\
%\footnotesize{~~~~~~~~~~~~kainanxiang@nankai.edu.cn} (Xiang)
\end{center}
\vskip 2mm

%\centerline{\tt This version:  April 29, 2018}

\begin{abstract}

    The uniform spanning forest measure ($\USF$) on a locally finite, infinite connected graph $G$ with {conductance} $c$ is defined as a 
    weak limit of uniform spanning tree measure on finite subgraphs.  Depending 
    on the underlying graph and conductances, the corresponding $\USF$  is not necessarily concentrated on the set of spanning trees. 
    Pemantle~\cite{PR1991} showed that  on  $\Z^d$, equipped with the {\it unit }conductance $ c=1$,   $\USF$ is concentrated on spanning trees if and only if $d \leq 4$. In this work we study the $\USF$ associated with conductances induced by $\lambda$--biased random walk on $\Z^d$, $d \geq 2$, $0 < \lambda < 1$, {\it i.e.} conductances are set to be $c(e) = \lambda^{-|e|}$,  where $|e|$ is the graph distance of the edge $e$ from the origin. Our main result states that in this case $\USF$ consists 
    of finitely many trees if and only if $d = 2$ or $3$. More precisely, we prove that the uniform spanning forest has $2^d$ trees if $d = 2$ or $3$, 
    and infinitely many trees if $d \geq 4$. Our method relies on the analysis of the spectral radius and the speed of the $\lambda$--biased random walk on $\Z^d$.

    \bigskip

\medskip

\noindent{\it AMS 2010 subject classifications}. Primary 60J10, 60G50, 05C81; secondary 60C05, 05C63, 05C80.

\medskip

\noindent{\it Key words and phrases}. Biased random walk, spectral radius, speed, free uniform spanning forest, wired uniform spanning forest.

\end{abstract}

\section{Introduction and main results}

Let $G:= (V(G), \, E(G))$ be a locally finite, connected infinite graph and fix a vertex $o$ in $G$ as root.
%We study biased random walks on the rooted graph $(G,\, o)$.
For $x \in V(G)$, let $|x|$ be the graph distance of $x$ from $o$. We define, for $n \ge 0$,
$$
B_G(n)
:=
\{x\in V(G):\ |x| \le n\},
\qquad
\partial B_G(n)
:=
\{x\in V(G):\ |x| =n\}.
$$

\noindent Let $\lambda>0$. The $\lambda$-biased random walk, or $\RW_\lambda$, is a random walk on $(G,\, o)$ with transition probabilities: for $y$ adjacent to $x$, 
\begin{eqnarray}
    \label{(1.1)}
  p_\lambda(x,y)=
  \begin{cases}
    \frac{1}{d_o} &{\rm if}\ x=o,\\
      \frac{\lambda}{d_x+\left(\lambda-1\right)d_x^-} &{\rm if} \ x\neq o \text{ and }  |y| = |x|-1, \\
%     \frac{1}{d_x+\left(\lambda-1\right)d_x^-} &{\rm if} \ x\neq o \text{ and } |y| \geq |x|.
      \frac{1}{d_x+\left(\lambda-1\right)d_x^-} & \text{otherwise.}
  \end{cases}
%   \left\{\begin{array}{cl}
% 0 & \text{if } y \text{ is not adjacent to } x, \\
% \frac{1}{d_x} &{\rm if}\ x=o,\\
%       \frac{\lambda}{d_x+\left(\lambda-1\right)d_x^-} &{\rm if}\ y\in \partial B_G(|x|-1)\ \mbox{and}\ x\neq o,\\
%      \frac{1}{d_x+\left(\lambda-1\right)d_x^-} &{\rm otherwise}.
%    \end{array}
% \right.
\end{eqnarray}

\noindent Here $d_x$ is the degree of vertex $x$, and $d_x^-$ (resp. $d_x^0$) is the number of edges connecting $x$ to $\partial B_G(|x|-1)$ (resp. $\partial B_G(|x|)$). Note that $d_x^{-}\ge  1$ if $x\not=o$, and $d_o^-=d_o^0=0$. When $\lambda = 1$, $\RW_\lambda$ is the usual simple random walk on $G$. For general properties of biased random walks on graphs we refer to \cite{LR-PY2016} and \cite{SSSWX2017a+}.

In this work we study the uniform spanning forest on the network associated with $\RW_\lambda$. It relies on the analysis of the spectral radius and the speed of the walk. More specifically, we focus on the spectral radius and the speed of $\lambda$--biased random walk on the $d$-dimensional lattice $\Z^d$, and always assume $0 < \lambda < 1$, unless it is stated otherwise.

From \eqref{(1.1)} one can see that $\RW_\lambda$ on $\Z^d$ is closely related to the drifted random walk on $\Z^d$, whose distribution is given by convolutions of step distribution
\begin{equation}
    \label{e:mu}
\mu(\bfe_1) = \cdots = \mu(\bfe_d) =
    \frac{1}{d(1+\lambda)}, \quad \mu(-\bfe_1) = \cdots =
    \mu(-\bfe_d) = \frac{\lambda}{d(1+\lambda)},
\end{equation}

\noindent where $\{\bfe_1, \ldots, \bfe_d\}$ is the standard basis of $\Z^d$. Before exiting from one of the $2^d$ open orthants, the $\lambda$--biased random walk and drifted random walk have the same distributions. This fact is crucial for the analysis of spectral radius, speed and intersection properties of $\lambda$--biased random walks on $\Z^d$.
However,  $\lambda$--biased random walks exhibit quite different behavior from drifted random walk when they hit some axial hyperplane or the boundary of the orthant.

Let $(X_n)$ be the $\RW_\lambda$ on $\Z^d$ and $p^{(n)}_\lambda (x,\, y) = \P_x (X_n = y)$ be the $n$-step transition probability of $X_n$,
where $\mathbb{P}_x$ is the law of $\RW_\lambda$ starting at $x$.
The spectral radius $\rho(\lambda)$ of $\RW_\lambda$ is defined to be the reciprocal of the convergence radius for the Green function
$$
\mathbb{G}_\lambda(x,y | z)
:=
\sum_{n = 0}^\infty p^{(n)}_\lambda(x,y) z^n.
$$

\noindent Clearly, $\rho(\lambda)$ does not depend on the choices of $x$ and $y$, and can be expressed as
$$
\rho(\lambda)
:=
\limsup_{n \to \infty} [p^{(n)}_\lambda (o,\, o)]^{1/n}.
$$

\noindent Define the speed $\mathcal{S}(\lambda)$ of $\RW_{\lambda}$ by
$$
\mathcal{S}(\lambda)
:=
\lim_{n \to \infty} \frac{|X_n|}{n},
$$

\noindent provided the limit exists almost surely. 
%Note that on $\Z^d$, the graph distance coincides with the $L^1$-norm.
\medskip

 %{\bf Remark.} 
 There are many deep and important questions related to how the spectral radius and the speed depend on the bias parameter $\lambda$.
%which is strongly connected to the structure of the underlying graph $G$. 
 Lyons, Pemantle and Peres \cite{LR-PR-PY1996b} asked  whether the speed of $\RW_\lambda$ on the supercritical Galton--Watson tree without leaves is strictly decreasing. This has been confirmed for $\lambda$  lying in some regions    (cf.\ \cite{BG-FA-SV2014,AE2014,AE2013,SH-WL-XK2015}),   but still remains open for  general values of $\lambda$. For the supercritical Galton--Watson tree with leaves, the speed is expected (\cite[Section~3]{BG-FA2014}) to be unimodal in $\lambda$ (due to presence of traps). On lamplighter graph $\Z \ltimes \sum_{x \in \Z} \Z_2$, the speed of $\RW_\lambda$ is positive if and only if $1 < \lambda < (1+\sqrt{5})/2$; see \cite{LR-PR-PY1996a}.
 
\medskip

We are ready to state our first main result. Its proof is given in Section~\ref{s:spectral}.

\begin{thm}\label{T:main1}

 Let $\lambda \in (0, \, 1)$ and let $(X_n)$ be $\RW_\lambda$ on $\Z^d$.
\begin{enumerate}[(i)]
\item
%We have that
%\[p^{(2n)}_\lambda(o,o)\asymp
%  \left(\frac{2\sqrt{\lambda}}{1+\lambda}\right)^{2n}\frac{1}{n^{3d/2}}. \]
%In particular,
The spectral radius is $\rho(\lambda) = \frac{2\sqrt{\lambda}}{1+\lambda}<1$. 
%and is strictly increasing in $\lambda$.

\item The speed exists, and equals $\sS(\lambda)=\frac{1-\lambda}{1+\lambda}$. 
%which is positive and strictly decreasing in $\lambda$.
\end{enumerate}

\end{thm}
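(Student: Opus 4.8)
The plan is to prove the two parts by different mechanisms, both anchored on the comparison with the drifted walk $\mu$ of \eqref{e:mu}. For (i) I will exploit that $\RW_\lambda$ is reversible, pass to a symmetric kernel whose operator norm I can compute by a two–sided estimate, and identify $\rho(\lambda)$ with that norm. For (ii) I will show that each coordinate of $(X_n)$ is eventually trapped on one side of the origin, so that after a finite time the walk coincides with the drifted walk and a strong law applies.

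For (i): on $\Z^d$ every edge joins $\{|x|=m\}$ to $\{|x|=m+1\}$ for some $m$, so $\RW_\lambda$ is exactly the conductance walk for $c(e)=\lambda^{-|e|}$ and is reversible with $\pi(x)=\lambda^{-|x|}D_x$, where I abbreviate $D_x:=d_x+(\lambda-1)d_x^-$; on $\Z^d$, $d_x=2d$ and $d_x^-=\#\{i:x_i\neq0\}$. Reversibility makes $P$ (self-adjoint on $\ell^2(\pi)$) unitarily equivalent to the symmetric kernel $Q(x,y)=\sqrt{p_\lambda(x,y)\,p_\lambda(y,x)}$ on $\ell^2(\Z^d)$; since $\prod_k\pi(x_k)$ telescopes along any cycle, the forward and reversed products of transition probabilities coincide, whence $p^{(n)}_\lambda(o,o)=(Q^n)_{o,o}$ and therefore $\rho(\lambda)=\|Q\|$. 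For a boundary edge $(x,y)$ I get $Q(x,y)=\sqrt{\lambda/(D_xD_y)}$. Because $\lambda<1$ we have $D_x=2d-(1-\lambda)d_x^-\ge d(1+\lambda)$, with equality exactly when all coordinates of $x$ are nonzero, so entrywise $0\le Q\le \frac{\sqrt\lambda}{d(1+\lambda)}A$, with $A$ the adjacency operator of $\Z^d$. Monotonicity of the operator norm under entrywise domination of nonnegative symmetric kernels, together with $\|A\|=2d$, gives $\rho(\lambda)\le\frac{2\sqrt\lambda}{1+\lambda}$. For the matching lower bound I test $Q$ against normalized indicators (or tent functions) of boxes $[N,2N]^d$ lying deep inside the positive orthant: such boxes meet no coordinate hyperplane, so there $Q=\frac{\sqrt\lambda}{d(1+\lambda)}A$ and the Rayleigh quotient converges to $\frac{\sqrt\lambda}{d(1+\lambda)}\cdot 2d=\frac{2\sqrt\lambda}{1+\lambda}$, whence $\rho(\lambda)\ge\frac{2\sqrt\lambda}{1+\lambda}$. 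Finally $\frac{2\sqrt\lambda}{1+\lambda}<1$ since $(1-\sqrt\lambda)^2>0$.

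For (ii): the key observation is that whenever $x_i\neq0$ the two edges in directions $\pm\bfe_i$ carry the same denominator $D_x$, with numerators $1$ (outward) and $\lambda$ (inward); hence, watching the $i$-th coordinate only at the steps when it actually moves, its jump chain is an honest nearest-neighbor walk on $\Z$ that steps away from $0$ with probability $\frac{1}{1+\lambda}$ and toward $0$ with probability $\frac{\lambda}{1+\lambda}$ (and symmetrically from $0$), irrespective of the other coordinates. As coordinate $i$ is selected at each step with probability bounded below, it moves infinitely often, and this outward-biased chain is transient, visiting $0$ only finitely often (from $1$ the chance of ever hitting $0$ equals $\lambda<1$). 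Applying this to every $i$, almost surely there is a finite time after which no coordinate is ever $0$, i.e.\ $(X_n)$ remains in the interior of one fixed orthant forever. On that region $\RW_\lambda$ coincides with the drifted walk, so after the trapping time the increments $|X_{n+1}|-|X_n|\in\{\pm1\}$ are i.i.d.\ with mean $\frac{1}{1+\lambda}-\frac{\lambda}{1+\lambda}=\frac{1-\lambda}{1+\lambda}$, and the strong law of large numbers yields $\sS(\lambda)=\frac{1-\lambda}{1+\lambda}$.

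The main obstacle is the lower bound $\rho(\lambda)\ge\frac{2\sqrt\lambda}{1+\lambda}$ in part (i): the set where $Q$ differs from the free kernel $\frac{\sqrt\lambda}{d(1+\lambda)}A$ is the union of the coordinate hyperplanes, which is infinite and of codimension one, hence not a compact perturbation, so one cannot simply invoke stability of the essential spectrum. The device that resolves this is to place the trial functions in boxes contained in an open orthant, where the perturbation is entirely absent. The structural fact underpinning (ii)—that the per-coordinate inward-to-outward ratio is always $\lambda:1$, making the coordinate jump chain genuinely Markov and independent of the others—is the analogous point that makes the trapping argument go through.
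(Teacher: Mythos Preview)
Your proof is correct, and both parts take genuinely different routes from the paper.

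For (i), the paper establishes the much stronger two-sided estimate $p^{(2n)}_\lambda(o,o)\asymp\big(\tfrac{2\sqrt\lambda}{1+\lambda}\big)^{2n}n^{-3d/2}$ by explicit path counting: the lower bound comes from paths that stay in a single open orthant (counted via Catalan numbers), and the upper bound from a decomposition according to the number of visits to the axial hyperplanes, combined with a combinatorial bound on the number of one-dimensional bridges with a prescribed number of returns to~$0$. Your operator-theoretic argument---identifying $\rho(\lambda)$ with the $\ell^2$-norm of the symmetrised kernel $Q$ and then sandwiching $Q$ against $\tfrac{\sqrt\lambda}{d(1+\lambda)}A$, with trial functions supported in an open orthant for the lower bound---is considerably shorter if one only wants $\rho(\lambda)$, but it does not deliver the polynomial correction $n^{-3d/2}$, which the paper records as a result of independent interest.

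For (ii), both arguments first show that $(X_n)$ visits the union of coordinate hyperplanes only finitely often, and then invoke a law of large numbers. The paper's proof of the finiteness (Lemma~2.2) is a renewal-style argument on the reflected walk $(|X_n^1|,\dots,|X_n^d|)$, bounding the probability of each successive return to the boundary by a fixed $q<1$; your observation that the jump chain of each single coordinate is itself an outward-biased nearest-neighbour walk on $\Z$ (because the outward-to-inward ratio at any site with $x_i\neq0$ is $1:\lambda$ regardless of the other coordinates) is more direct. One small point: the trapping time is a last-exit time, not a stopping time, so ``after the trapping time the increments are i.i.d.'' is a slight abuse. The clean fix---and this is exactly what the paper does in its Theorem~2.3---is to note that $|X_n|-|X_{n-1}|-\E[\,|X_n|-|X_{n-1}|\mid\sF_{n-1}\,]$ is a bounded martingale-difference sequence, apply the martingale SLLN, and use that the conditional drift equals $\tfrac{1-\lambda}{1+\lambda}$ for all but finitely many $n$.
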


It is straightforward from the expressions above that the spectral radius is strictly increasing  in $\lambda$ 
and speed is strictly decreasing in $\lambda$.

%{\bf (i)} For every $\lambda \in(0, 1),$
%\[p^{(2n)}_\lambda(o,o)\asymp
%  \left(\frac{2\sqrt{\lambda}}{1+\lambda}\right)^{2n}\frac{1}{n^{3d/2}};\]
%and $\rho(\lambda)=\frac{2\sqrt{\lambda}}{1+\lambda}<1$ is strictly increasing in $\lambda.$
%
%{\bf (ii)} Let $\sX$ be the union of all the axial hyperplanes, that is,
%  \[ \sX = \left\{ x \in \Z^d :\ x_i = 0 \text{ for some } 1 \leq i \leq d
%    \right\}. \]
%Then almost surely the $\RW_{\lambda}$ with $\lambda\in (0,1)$ visits $\sX$ only finitely many times; and thus
%the probability that the $\RW_\lambda$ eventually enters into a given open orthant is $\frac{1}{2^d}$ when it starts at $0.$
%
%{\bf (iii)} Almost surely, as $n \to \infty,$
%  \[\frac{1}{n} \left(\left|X_n^1\right|, \cdots, \left|X_n^d\right|\right) \to \frac{1-\lambda}{1+\lambda}
%     \left(\frac{1}{d}, \cdots, \frac{1}{d}\right),\
%     \frac{|X_n|}{n}\to \frac{1-\lambda}{1+\lambda};\]
%and speed $\sS(\lambda)=\frac{1-\lambda}{1+\lambda}$ of the $\RW_{\lambda}$ is positive and strictly
%decreasing in $\lambda \in (0, 1).$
%\end{thm}

% WE HAVE BEEN HERE

\medskip

We now turn to the main topic of the paper, namely, the study of the uniform spanning forest of the network associated with the $\RW_\lambda$, by applying Theorem~\ref{T:main1}. Viewing $G=(V(G), \, E(G))$ as an infinite network with appropriate conductances on its edges, the uniform spanning forest measures are defined as weak limit of uniform spanning tree measures of finite subgraphs of $G$. The limit can be taken with either free or wired boundary conditions, yielding the free uniform spanning forest measure (denoted by $\FSF$) and the wired uniform spanning forest measure ($\WSF$), respectively. In general, $\FSF$ stochastically dominates $\WSF$ on any infinite network. If they coincide, we call them the uniform spanning forests ($\USF$) for simplicity. For more details, see Section \ref{s:usf} (or \cite{LR-PY2016}).

Both $\FSF$ and $\WSF$ on an infinite network are concentrated on the set of spanning forests with the property that every tree (i.e., the connected component) in the forest is infinite. %It is important to understand the number of trees in FSF and WSF. In this paper, we are interested in the network associated with $\RW_\lambda$ on $\Z^d$.
When $\lambda=1$, the remarkable result of Pemantle~\cite{PR1991}  (see also \cite{LR-PY2016}) states that $\USF$ on $\Z^d$ has a single tree for $d\le 4$ and has infinitely many trees for $d \ge 5$. By \cite[Theorem~9.4]{BI-LR-PY2001}, this type of phase transition depending on the  dimension has a deep connection with the well-known intersection property of independent simple random walks on $\Z^d$, namely, two independent simple random walks on $\Z^d$ intersect infinitely often if $d \le 4$ and finitely many times if $d \ge 5$; see for example Lawler \cite{LG1980,LG1991}. We show that there is a phase transition for the number of trees in the USF on the network associated with $\RW_\lambda$ on $\Z^d$ with $0<\lambda < 1$, while the critical dimension is reduced from $4$ to $3$.

\begin{thm}
\label{T:USF}

Let $\lambda \in (0, \, 1)$. Almost surely, the number of trees in the uniform spanning forest associated with $\RW_{\lambda}$ on $\Z^d$ is $2^d$ if $d = 2$ or $3$, %$2 \le d \le 3$,
and is infinite if $d \ge 4$.

\end{thm}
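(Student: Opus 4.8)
The plan is to sample the $\WSF$ by Wilson's algorithm rooted at infinity and to read off the number of trees from the coalescence pattern of loop-erased $\RW_\lambda$ paths, which in turn is governed by the intersection behaviour of independent copies of $\RW_\lambda$, as in \cite[Section~9]{BI-LR-PY2001}. The organising principle is the decomposition of $\Z^d$ into its $2^d$ orthants. By Theorem~\ref{T:main1}(ii) the speed $\sS(\lambda)=\frac{1-\lambda}{1+\lambda}$ is strictly positive, so $\RW_\lambda$ is transient and ballistic; since within each open orthant $\RW_\lambda$ agrees in law with the drifted walk \eqref{e:mu}, whose drift points to the far corner of that orthant, the walk a.s.\ is eventually confined to a single orthant and escapes to infinity along the corresponding diagonal direction. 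I would first make this precise: using positivity of the speed together with the exponential return estimate furnished by $\rho(\lambda)<1$ in Theorem~\ref{T:main1}(i), show that a walk started deep inside an orthant $\sigma\in\{+,-\}^d$ stays there for all time with probability close to $1$, and that crossings of the axial hyperplanes are a.s.\ finite in number. This reduces every asymptotic intersection question to the corresponding question for the drifted walk inside one orthant.

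The analytic heart of the argument is a Green's-function estimate. Writing $G_\lambda(x,z)=\sum_{n\ge 0}p^{(n)}_\lambda(x,z)$, I would establish, via a local central limit theorem for the drifted walk \eqref{e:mu} (cf.\ \cite{LG1991}), that along the diagonal escape direction $G_\lambda(x,z)\asymp |z|^{(1-d)/2}$, with Gaussian transverse decay on the scale $\sqrt{|z|}$ in the $d-1$ directions orthogonal to the drift. Since $\RW_\lambda$ is transient, $\P[z\in\mathrm{Range}]\asymp G_\lambda(x,z)$, so the expected number of common sites of two independent copies started deep inside one orthant satisfies
\begin{equation*}
\sum_{z}G_\lambda(x,z)\,G_\lambda(y,z)\;\asymp\;\sum_{m\ge 1} m^{(d-1)/2}\cdot m^{1-d}\;=\;\sum_{m\ge 1} m^{(1-d)/2},
\end{equation*}
where $m\asymp|z|$ runs along the drift axis and $m^{(d-1)/2}$ counts the transverse sites in the tube of width $\sqrt{m}$. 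This converges if and only if $d\ge 4$. Hence two independent walks escaping into a common orthant intersect infinitely often a.s.\ when $d\in\{2,3\}$ and only finitely often when $d\ge 4$, while the ballistic separation from the first paragraph forces walks escaping into distinct orthants always to intersect finitely often. Controlling the walk near the axial hyperplanes, where $\RW_\lambda$ genuinely differs from the drifted walk, so that these estimates survive uniformly, is the step I expect to be the main obstacle.

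It remains to convert this intersection dichotomy into the tree count. For $d\in\{2,3\}$, any two vertices escaping into a fixed orthant $\sigma$ have infinitely many intersections, so by the criterion of \cite[Section~9]{BI-LR-PY2001} their loop-erasures coalesce and they lie in a single $\WSF$ tree; thus each orthant contributes exactly one tree and the number of trees is at most $2^d$. Conversely, walks escaping into different orthants intersect only finitely often, so these trees are a.s.\ distinct, and since each of the $2^d$ diagonal directions is realised (all orthants occur with positive probability, upgraded to an a.s.\ statement by ergodicity of the $\WSF$ under $\Z^d$-translations), the count is at least, hence exactly, $2^d$. For $d\ge 4$ the expected number of intersections is finite even inside one orthant, and the argument of \cite[Section~10]{BI-LR-PY2001} then yields infinitely many trees a.s. Finally I would verify $\FSF=\WSF$, so that the statement concerns the unambiguous $\USF$: the exponential growth of the conductances $c(e)=\lambda^{-|e|}$ penalises variation of any finite-energy harmonic function on far-out and hyperplane-crossing edges, forcing it to be constant, whence the two measures coincide.
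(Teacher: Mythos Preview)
Your approach is essentially the paper's: reduce to the drifted walk inside orthants via the finite-crossing property of axial hyperplanes, use a local limit theorem to compute the intersection behaviour (your Green-function heuristic $\sum_m m^{(1-d)/2}$ is exactly what Section~\ref{s:intersection} makes rigorous), and then apply the intersection criterion of \cite[Theorem~9.4]{BI-LR-PY2001}. Your sketch of $\FSF_\lambda=\WSF_\lambda$ is also the paper's argument (Theorem~\ref{thm6.1}).

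There is, however, one genuine gap. You invoke ``ergodicity of the $\WSF$ under $\Z^d$-translations'' to upgrade the lower bound on the tree count from positive probability to almost sure. But the network $(\Z^d,c_\lambda)$ with $c_\lambda(e)=\lambda^{-|e|}$ is \emph{not} translation-invariant---the conductances depend on the distance from the origin---so $\Z^d$ does not act by network automorphisms and there is no ergodicity to appeal to. The repair is already contained in the result you cite: \cite[Theorem~9.4]{BI-LR-PY2001} asserts that on \emph{any} connected network the number of $\WSF$ trees is almost surely equal to the deterministic constant
\[
K=\sup\{k:\ \exists\, w_1,\ldots,w_k,\ \alpha(w_1,\ldots,w_k)>0\},
\]
with no invariance hypothesis. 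Thus once you exhibit $2^d$ starting points (one deep in each orthant) from which the walks avoid each other with positive probability, you get $K\ge 2^d$ and hence a.s.\ at least $2^d$ trees; the upper bound comes from showing $\alpha(w_1,\ldots,w_{2^d+1})=0$ for every choice of starting points, via your pigeonhole-then-infinite-intersection argument. This is precisely how the paper argues.
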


Theorem~\ref{T:USF} is (restated and proved) in Section~\ref{s:usf}.
As we mentioned before, an important step in the proof is to
determine the number of intersections of two independent random walks. We state the result below and its proof is given in Section~\ref{s:intersection}.
%It may be viewed as a complement to the theory of intersections of random walks and has its own interest.

\begin{thm}
 \label{T:intersection}

 Assume $\lambda\in (0,\, 1)$. Let $(Z_n)_{n=0}^{\infty}$ and $(W_n)_{n=0}^{\infty}$ be independent drifted random walks on $\Z^d$ with the same step distribution $\mu$ given by \eqref{e:mu}, starting at $z_0$ and $w_0$ respectively. Then almost surely,
 $$
 | \{Z_m;\ m\ge 0\} \cap \{W_n;\ n\ge 0\} |
 \ \text{is finite for}\ d\ge 4\ \text{and infinite for}\ d\le 3.
 $$

\end{thm}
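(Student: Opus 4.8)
The plan is to reduce the statement to first- and second-moment estimates for the number of collisions between $Z$ and $W$, the essential input being the local central limit theorem (LCLT) for $\mu$. Since $\mu$ has nonzero mean $v=\frac{1-\lambda}{d(1+\lambda)}(\bfe_1+\cdots+\bfe_d)$, nondegenerate covariance $\Sigma$, and bounded (nearest-neighbour) support of period $2$, the $k$-step kernel $\bar p_k$ of the walk started at $0$ satisfies, for $x$ of the correct parity in the diffusive regime $|x-kv|\le k^{2/3}$,
\[
\bar p_k(x)=\frac{2+o(1)}{(2\pi k)^{d/2}\sqrt{\det\Sigma}}\,
\exp\!\Bigl(-\tfrac{(x-kv)^{\top}\Sigma^{-1}(x-kv)}{2k}\Bigr),
\]
with Gaussian and large-deviation upper bounds elsewhere. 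Consequently $\P(Z_m=W_n)=\sum_x\bar p_m(x-z_0)\,\bar p_n(x-w_0)$ is, up to constants, of order $(m+n)^{-d/2}\exp\!\bigl(-c\,|(m-n)v+(z_0-w_0)|^2/(m+n)\bigr)$. The key structural point is that the two walks share the \emph{same} drift $v$, so the two Gaussian clouds overlap only when $|m-n|=O(\sqrt{m+n})$; this loss of one longitudinal degree of freedom is exactly what lowers the critical dimension from $4$ (the driftless case) to $3$.

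Summing over all $(m,n)$, the pairs with $m+n\asymp s$ contribute $\asymp\sqrt s\cdot s^{-d/2}=s^{-(d-1)/2}$ (there being $\asymp\sqrt s$ admissible values of $m-n$), so the expected number of collision pairs is $\E\,\#\{(m,n):Z_m=W_n\}\asymp\sum_s s^{-(d-1)/2}$, which converges if and only if $d\ge4$. For $d\ge4$ this expectation is finite, hence the number of collision pairs is a.s.\ finite; since each distinct intersection point is counted at least once, $|\{Z_m\}\cap\{W_n\}|<\infty$ almost surely.

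For $d\le3$ I would argue infiniteness through disjoint longitudinal slabs. Writing $\langle v,\,\bfe_1+\cdots+\bfe_d\rangle=\sS(\lambda)>0$ for the longitudinal drift, set $S_N:=\{x\in\Z^d:\langle x,\,\bfe_1+\cdots+\bfe_d\rangle\in[R_N,R_{N+1})\}$ for a rapidly increasing sequence $R_N$, let $J_N$ be the number of distinct points of $S_N$ lying in both ranges, and put $A_N:=\{J_N\ge1\}$. Using transience of the walk (so that $\P(\text{hit }x)\asymp G(z_0,x)=\sum_k\bar p_k(x-z_0)$), the same LCLT estimates give $\E J_N\asymp R_N^{(3-d)/2}$, which stays bounded below for all $d\le3$, and in fact $\sum_N\P(A_N)=\infty$. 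The slabs are disjoint, so $\limsup_N A_N\subseteq E:=\{|\{Z_m\}\cap\{W_n\}|=\infty\}$; moreover $E$ is invariant under finite permutations of the i.i.d.\ increments of $(Z,W)$, so by the Hewitt--Savage $0$--$1$ law $\P(E)\in\{0,1\}$. It therefore suffices to prove $\P(\limsup_N A_N)>0$, for which I would invoke the generalized second Borel--Cantelli (Kochen--Stone) lemma together with a \emph{quasi-independence} bound $\P(A_j\cap A_k)\le C\,\P(A_j)\P(A_k)$ for $j\ne k$: since the walks are ballistic, the passage through a far slab is almost independent of the behaviour in an earlier one, the transverse entry law being essentially washed out on the scale $\sqrt{R}$. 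Granting this, $\sum_{j,k\le n}\P(A_j\cap A_k)\le C\bigl(\sum_{k\le n}\P(A_k)\bigr)^2(1+o(1))$, so Kochen--Stone gives $\P(\limsup_N A_N)\ge1/C>0$, and the $0$--$1$ law upgrades this to $\P(E)=1$.

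The main obstacle is the critical dimension $d=3$. There the first moment $\E J_N$ is only of constant order while intersections \emph{cluster}: the expected number of collision pairs per intersection point, and the second moment, carry logarithmic factors ($\E J_N^2\asymp\log R_N$), so $\P(A_N)$ decays like $1/\log R_N$ and a bare Paley--Zygmund bound no longer yields a uniform positive lower bound. This is the direct analogue of the delicate borderline $d=4$ for driftless simple random walk treated by Lawler, and it is precisely why I route the argument through $\sum_N\P(A_N)=\infty$ and the quasi-independence estimate rather than through $\P(A_N)\ge c$. Establishing that quasi-independence bound, i.e.\ controlling the correlation between intersections in well-separated slabs via Green's-function decay and the regeneration structure of the ballistic walks, is the technical heart of the proof.
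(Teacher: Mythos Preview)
Your first-moment analysis is correct and essentially matches the paper: the LCLT gives $\P(Z_m=W_n)\asymp (m+n)^{-d/2}$ on the band $|m-n|=O(\sqrt{m+n})$ and is negligible off it (the paper formalises the latter via a window $Q_n(\varepsilon)$ and Azuma--Hoeffding rather than Gaussian large deviations), so that $\sum_{m,n}\P(Z_m=W_n)\asymp\sum_s s^{-(d-1)/2}$. For $d\ge4$ this finishes the proof, exactly as you say and as the paper does.

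For $d\le3$ your route diverges from the paper's and is considerably more laborious. The paper does \emph{not} go through slabs, Kochen--Stone, or quasi-independence at all. It simply observes that the first moment diverges (the very computation you already did, restricting to $|m-n|\le\sqrt n$ and $x\in R_{n,1}$) and then invokes \cite[Theorem~10.24]{LR-PY2016}: for independent random walks on a countable group, $\#\{(m,n):Z_m=W_n\}=\infty$ almost surely if and only if its expectation is infinite. That dichotomy absorbs all of the work you are planning to do by hand; in particular, the logarithmic clustering you correctly flag in $d=3$ is a genuine obstruction for a bare second-moment scheme, but it is irrelevant once one has this first-moment criterion.

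Your slab/Kochen--Stone programme is a legitimate alternative in principle, but as you yourself note, the quasi-independence bound $\P(A_j\cap A_k)\le C\,\P(A_j)\P(A_k)$ for well-separated slabs is the technical crux and is not established in the proposal; carrying it out would essentially reprove, in this special case, the general theorem the paper cites. The practical fix is to keep your first-moment estimate and replace the entire slab argument by a reference to \cite[Theorem~10.24]{LR-PY2016}.
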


The rest of the paper is organised as follows. In Section~\ref{s:spectral}, we prove sharp estimates for the $n$-step transition probability and the strong law of large numbers of $\RW_\lambda$ on $\Z^d$. The statements for the spectral radius and the speed in Theorem~\ref{T:main1} are direct consequences. The number of intersections of two independent drifted (or biased) random
walks is studied in Section~\ref{s:intersection}. In Section~\ref{s:usf} we consider the uniform spanning forests associated with $\RW_\lambda$, and prove Theorem~\ref{T:USF}.

%For any nonnegative functions $f$ and $g$ on $\mathbb{N}$, write $f\asymp g$
%if there are two positive constants $c_1$ and $c_2$ such that
%$$c_1g(n)\leq f(n)\leq c_2g(n),\ n\in\mathbb{N}.$$
%In addition, constants $C_i$, $c_i$ and $C$, $c$ may vary from line to line.

\medskip

\section{Spectral radius and speed}
\label{s:spectral}

In this section, we prove Theorem \ref{T:main1} for $\RW_\lambda$ on $\mathbb{Z}^d$. In fact, we obtain sharp estimates for the $n$-step transition probability (Theorem~\ref{T:hkZd}), and establish a strong law of large numbers (Theorem~\ref{T:speed}). Theorem~\ref{T:main1} is a straightforward consequence of Theorems~\ref{T:hkZd} and
\ref{T:speed}.

For positive functions $f$ and $g$ on $\N$, we write $f\asymp g$ if there is a constant $c>0$ such that $c^{-1} g(n) \le f(n) \le c g(n)$ for all $n \in \N$, and write $f \sim g$ if $\lim_{n\to\infty} \frac{f(n)}{g(n)} = 1$.

\begin{thm}
 \label{T:hkZd}

 Let $\lambda \in (0, \, 1)$, and let $(X_n)$ be $\RW_\lambda$ on $\Z^d$. Then
 \begin{equation}
     \label{e:hk}
     p^{(2n)}_\lambda(o,\, o)
     \asymp
     \Big(\frac{2\sqrt{\lambda}}{1+\lambda}\Big)^{\! 2n} \frac{1}{n^{3d/2}}.
 \end{equation}

 \noindent In particular, the spectral radius equals $\rho(\lambda) = \frac{2\sqrt{\lambda}}{1+\lambda}<1$, and is strictly increasing in $\lambda$.

\end{thm}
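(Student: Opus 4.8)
The starting point I would use is an exact rewriting of the return probability as a simple-random-walk average with a multiplicative weight. Along any nearest-neighbour loop $\gamma=(o=x_0,x_1,\dots,x_{2n}=o)$ the number of steps that decrease $|\cdot|$ equals the number that increase it, namely $n$ each; by \eqref{(1.1)} every decreasing step from a vertex $x$ carries a factor $\lambda$, while every step out of a vertex with $k(x)$ non-zero coordinates carries the denominator $2d+(\lambda-1)k(x)$ (on $\Z^d$ one has $d_x=2d$ and $d_x^-=k(x)$). Writing $r(k):=\frac{d(1+\lambda)}{2d+(\lambda-1)k}$, a direct computation gives
\begin{equation}
\label{e:FKid}
p^{(2n)}_\lambda(o,o)=\rho^{2n}\,\mathbb E\Big[\prod_{i=0}^{2n-1} r\big(k(X_i)\big)\,;\,X_{2n}=o\Big],
\qquad \rho=\frac{2\sqrt\lambda}{1+\lambda},
\end{equation}
where now $(X_i)$ is the \emph{simple} random walk on $\Z^d$ started at $o$, each length-$2n$ path receiving weight $(2d)^{-2n}$. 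Since $r(k)\in(0,1]$ with $r(k)=1$ iff $k=d$, the crude bound $p^{(2n)}_\lambda(o,o)\le\rho^{2n}\,\mathbb P(X_{2n}=o)\asymp\rho^{2n}n^{-d/2}$ is immediate, already giving $\rho(\lambda)\le\frac{2\sqrt\lambda}{1+\lambda}$; that $\rho<1$ and that $\rho(\lambda)$ is strictly increasing then follow from elementary calculus applied to $\frac{2\sqrt\lambda}{1+\lambda}$.

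The whole point of \eqref{e:FKid} is that the weight $\prod_i r(k(X_i))$ is strictly less than $1$ exactly when $X_i$ sits on a coordinate hyperplane, so it acts as a \emph{soft confinement} of the simple-random-walk bridge to one of the $2^d$ open orthants; this soft wall is what upgrades the free exponent $n^{-d/2}$ to the confined exponent $n^{-3d/2}$. The plan is therefore to show that the dominant contribution to \eqref{e:FKid} comes from bridges confined to a single open orthant, and that such bridges have probability of order $n^{-3d/2}$.

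For the lower bound I would simply discard the penalty on a restricted family of paths: force the first and last $d$ steps to run $o\to\mathbf 1\to\cdots$ and $\cdots\to\mathbf 1\to o$ along the diagonal (with $\mathbf 1=(1,\dots,1)$, an event of positive $n$-independent probability), and require the intermediate $2n-2d$ steps to stay in the open positive orthant, where $k(X_i)\equiv d$ and hence the weight is $1$. The middle portion is the orthant-confined bridge $\tilde p^{\mathrm{orth}}_{2m}(\mathbf 1,\mathbf 1)$, which by the reflection principle for the (sign-change invariant) simple random walk, indexed by the group $\{\pm1\}^d$, equals
\begin{equation}
\label{e:reflection}
\tilde p^{\mathrm{orth}}_{2m}(x,y)=\sum_{s\in\{\pm1\}^d}\Big(\prod_{j}s_j\Big)\,
\mathbb P_x\big(X_{2m}=(s_1y_1,\dots,s_dy_d)\big).
\end{equation}
Inserting the local central limit theorem and expanding the Gaussians for fixed $x,y$, the leading $m^{-d/2}$ contributions cancel in this alternating sum and the next order survives, yielding $\tilde p^{\mathrm{orth}}_{2m}(x,y)\sim C_d\,\big(\prod_j x_jy_j\big)\,m^{-3d/2}$; with $x=y=\mathbf 1$ this is of order $n^{-3d/2}$, the desired lower bound.

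The upper bound is where the real difficulty lies, and is the step I expect to be the main obstacle: I must show that bridges which are \emph{not} confined to a single orthant contribute no more than $O(\rho^{2n}n^{-3d/2})$. Each step spent on a coordinate hyperplane multiplies the weight by at most $\bar r:=\max_{0\le k<d}r(k)<1$, so a bridge with $L$ hyperplane visits is penalised by $\bar r^{\,L}$, and the task is to integrate the simple-random-walk bridge measure against this penalty and bound the total by $n^{-3d/2}$. I would carry this out either as a Feynman–Kac comparison between the soft killing $\prod_i r(k(X_i))$ and the hard Dirichlet killing of the walk absorbed on the hyperplanes (for which the $n^{-3d/2}$ bound for cones is classical), or by decomposing the bridge into excursions between successive hyperplane visits, estimating each by the local central limit theorem and controlling $L$ through the local times at $0$ of the $d$ coordinate walks. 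The delicate point is uniformity: since the penalty per hyperplane visit is only a constant factor, a crude union bound over the number of visits is hopeless, and one must genuinely exploit the geometry—that returning to $o$ while repeatedly touching the hyperplanes is, up to constants, as costly as staying in one orthant. Once the matching bound $p^{(2n)}_\lambda(o,o)\le C\rho^{2n}n^{-3d/2}$ is established, \eqref{e:hk} and the identification $\rho(\lambda)=\frac{2\sqrt\lambda}{1+\lambda}$ follow at once.
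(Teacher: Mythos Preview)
Your identity \eqref{e:FKid} is correct and is exactly the starting point the paper uses implicitly (their $\eta$ is your $\bar r=r(d-1)$), and your lower bound via orthant-confined bridges is the same strategy as the paper's Step~1. The paper computes the confined bridge by conditioning on the number of steps in each coordinate and using Catalan numbers, which avoids the higher-order local-CLT expansion you would need to make the alternating sum \eqref{e:reflection} produce the $m^{-3d/2}$ term; your route works too but is heavier.

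The genuine gap is the upper bound, which you leave as a sketch. Your instinct that the proof goes through the coordinate local times is right, but your diagnosis that ``a crude union bound over the number of visits is hopeless'' is exactly backwards: this is precisely what the paper does, and it succeeds because of one combinatorial lemma you are missing. Namely (Lemma~\ref{Lem5.1}): the number of one-dimensional bridges of length $2n$ with exactly $k+1$ visits to $0$ is at most $c\,k^{5/2}\,4^{n}\,n^{-3/2}$, \emph{uniformly in $k$}. The point is that the $n^{-3/2}$ confinement exponent is already present for every fixed $k$, while the price of allowing $k$ returns is only polynomial in $k$. Summing the penalty then gives $\sum_k \bar r^{\,k} k^{5/2}<\infty$, and the $n^{-3/2}$ survives. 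The paper finishes by projecting the $\Z^d$-loop onto its $d$ coordinates, applying this lemma in each coordinate, and handling the multinomial allocation of the $2n$ steps among coordinates (which concentrates near $n_j\approx n/d$). Without this uniform-in-$k$ counting estimate your two proposed routes (Feynman--Kac comparison with hard killing, or excursion decomposition with LCLT control) remain heuristics; with it, the excursion decomposition becomes a three-line computation.
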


% \begin{conj}

%  For ${\rm RW}_\lambda$ on $\mathbb{Z}^d$ with $\lambda\in (0,\, 1)$, for any $x$, $y\in\mathbb{Z}^d$, there exists a constant $c(x,y,\lambda)\in (0,\, \infty)$, which is continuous in $\lambda\in (0,\, 1)$, such that
%  $$
%  p_\lambda^{(n)}(x,\, y)
%  \sim
%  c(x,\, y,\, \lambda)\, \rho_\lambda^n n^{-3d/2}
%  \ \text{when }\ n\equiv\sum\limits_{i=1}^d(y_i-x_i)\mod2.
%  $$

% \end{conj}

%\subsection{$\RW_\lambda$ on $\mathbb{Z}^d$ }

The proof of Theorem \ref{T:hkZd} relies on the following lemma, which is motivated by \cite[Exercise 1.7]{PG2015}. For $0 \le k < n$, let $B_{n,k}$ be the set of paths $(x_0, \ldots, x_{2n})$ taking values in $\Z$ with $x_0=0=x_{2n}$ and $\# \{ i: \ 1\le i\le 2n, \, x_i = 0\} = k+1$. Here and throughout, by a path we mean $|x_i-x_{i-1}|=1$ for all $i$ (in other words, it is a possible trace of a simple random walk), and $2n$ is called the length of the path.

\begin{lem}
 \label{Lem5.1}

 There is a positive constant $c$ such that $|B_{n,k}|\le \frac{c \, k^{5/2}4^n}{n^{3/2}}$ for $n\in\mathbb{N}$ and $k\in [0,\, n]$.

\end{lem}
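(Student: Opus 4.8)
The plan is to count $B_{n,k}$ by decomposing each path into its excursions away from the origin and then passing to generating functions. First I would observe that a path $(x_0,\dots,x_{2n})\in B_{n,k}$ with $x_0=x_{2n}=0$ visits $0$ at exactly $k+1$ positive times $0<t_1<\cdots<t_{k+1}=2n$, hence is uniquely a concatenation of $k+1$ excursions (first-return paths) of even lengths $2m_1,\dots,2m_{k+1}$ with $m_i\ge1$ and $\sum_i m_i=n$. The number of excursions of length $2m$ is $a_m=2C_{m-1}$, where $C_{m-1}=\frac1m\binom{2m-2}{m-1}$ is the Catalan number counting positive first-return paths and the factor $2$ accounts for the choice of sign. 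Therefore $|B_{n,k}|=\sum_{m_1+\cdots+m_{k+1}=n,\ m_i\ge1}\prod_i a_{m_i}=[z^n]F(z)^{k+1}$, where $F(z)=\sum_{m\ge1}a_mz^m=1-\sqrt{1-4z}=2zC(z)$ and $C(z)=\frac{1-\sqrt{1-4z}}{2z}$ is the Catalan series.

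Next I would extract this coefficient in closed form. Writing $F(z)^{k+1}=2^{k+1}z^{k+1}C(z)^{k+1}$ and invoking the standard power identity $[z^\ell]C(z)^j=\frac{j}{2\ell+j}\binom{2\ell+j}{\ell}$ (a consequence of Lagrange inversion, or of the cycle lemma) with $j=k+1$ and $\ell=n-k-1$, a short simplification gives
\begin{equation*}
|B_{n,k}|=2^{k+1}\,\frac{k+1}{n}\binom{2n-k-2}{n-1}.
\end{equation*}
As a check, $k=0$ returns $|B_{n,0}|=\frac2n\binom{2n-2}{n-1}=2C_{n-1}=a_n$, the single-excursion count, while $k=n$ gives $0$ (as it must, since $k+1$ excursions need length at least $2(k+1)>2n$).

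The decisive estimate is then to bound $2^{k+1}\binom{2n-k-2}{n-1}$. The key point is that the factor $2^{k+1}$ must be absorbed against the binomial coefficient rather than estimated on its own: any excursion-by-excursion bound such as $a_m\le C\,4^m m^{-3/2}$ would accumulate a spurious exponential factor $C^{k+1}$ and so fail to be uniform in $k$. Instead I would use the uniform central-binomial estimate $\binom{N}{m}\le c_1\,2^N/\sqrt N$, valid for all $0\le m\le N$ with an absolute constant $c_1$. Restricting to $1\le k\le n-1$ (for $k=n$ one has $|B_{n,k}|=0$, and $k=0$ is the single-excursion case $a_n\asymp 4^n/n^{3/2}$), we have $N=2n-k-2\ge n-1$, so $\binom{2n-k-2}{n-1}\le c_1\,2^{2n-k-2}/\sqrt{n-1}\le c_2\,2^{2n-k-2}/\sqrt n$. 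The powers of two combine, $2^{k+1}\,2^{2n-k-2}=2^{2n-1}=\tfrac12 4^n$, yielding $|B_{n,k}|\le c_3\,(k+1)\,4^n/n^{3/2}$; since $k+1\le 2k\le 2k^{5/2}$ for $k\ge1$, this gives the asserted $|B_{n,k}|\le c\,k^{5/2}4^n/n^{3/2}$.

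The only genuine obstacle is this uniformity in $k$ all the way up to $k$ of order $n$, which is exactly why I route the argument through the exact closed form, where the cancellation of $2^{k+1}$ against the binomial coefficient is explicit, rather than through a factorwise bound on excursions. Passing through the closed form also makes clear that the true order is only linear in $k$, with an additional Gaussian factor $e^{-k^2/4n}$ coming from the pointwise Stirling asymptotic $\binom{2n-k-2}{n-1}\sim 2^{2n-k-2}e^{-k^2/4n}/\sqrt{\pi n}$ (which is sharper but not uniform as $n-1$ leaves the centre of the coefficient). Consequently the stated exponent $5/2$ is comfortably larger than required and the bound is reached with ample slack.
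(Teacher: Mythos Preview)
Your argument is correct and, in fact, yields the sharper bound $|B_{n,k}|\le c\,(k+1)\,4^n/n^{3/2}$, which you then weaken to the stated $k^{5/2}$. The route, however, is genuinely different from the paper's. The paper never computes $|B_{n,k}|$ exactly: it splits off the trivial regime $k\ge n/2$ (where $|B_{n,k}|\le 4^n$ already suffices), and for $0<k<n/2$ argues that among the excursions at least one has length $\ge 2n/k$; it then applies the pointwise Catalan bound $C_\ell< 4^\ell/((\ell+1)\sqrt{\pi\ell})$ to that single long excursion and sums the remaining factors using $\sum_{m\ge1}C_{m-1}4^{-m}=\tfrac12$, so that the geometric factor $2^k$ from the sign choices cancels exactly. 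This is more elementary (no Lagrange inversion or power identity for $C(z)^j$) but pays for it with the loose exponent $5/2$ in $k$. Your closed form $|B_{n,k}|=2^{k+1}\frac{k+1}{n}\binom{2n-k-2}{n-1}$ makes the cancellation of $2^{k+1}$ transparent and gives the linear dependence on $k$, at the cost of invoking the identity $[z^\ell]C(z)^j=\frac{j}{2\ell+j}\binom{2\ell+j}{\ell}$.

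One remark on the boundary case $k=0$: your own computation gives $|B_{n,0}|=2C_{n-1}>0$, whereas the inequality in the lemma reads $|B_{n,0}|\le 0$. This is a quirk of the statement (the paper's proof asserts $|B_{n,0}|=0$, apparently under a slightly different convention), and is harmless for the application since only $k\ge1$ is used downstream; your parenthetical acknowledges the single-excursion count but does not resolve the discrepancy. You may simply note that the bound is being proved for $1\le k\le n$, which is all that is needed in the proof of Theorem~\ref{T:hkZd}.
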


\begin{proof}
The lemma holds if $k=0$ ($|B_{n,k}|=0$ in this case), or if $k \ge \frac{n}{2}$ (using the trivial inequality $|B_{n,k}| \le 2^{2n}$).

Assume now $0<k<\frac{n}{2}$. For $\ell\ge 1$, let $C_\ell= \frac{1}{\ell+1} {2\ell \choose \ell}$ be the $\ell$-th Catalan number. The number of paths $(x_0, \ldots, x_{2\ell})$ on $\mathbb{Z}$ with length $2\ell$ such that $x_0=x_{2\ell}=0\in \Z$ and that $x_i \not= 0$ for $1\le i\le 2\ell-1$ is $2C_{\ell-1}$; such paths are the so-called excursions. By splitting the paths in $B_{n,k}$ into excursions, we see that%\footnote{We mention that it is not an identity because of the cases when some (or all) of the $n_i$'s are identical.}
$$
|B_{n,k}|
\le
\sum_{\substack{n_1 + \cdots + n_k = n \\ n_i \ge 1,\, 1\le i\le k}}
(2C_{n_1-1})(2C_{n_2-1})\cdots (2C_{n_k-1})
=
2^k \sum_{\substack{n_1 + \cdots + n_k = n \\ n_i \ge 1,\, 1\le i\le k}}
C_{n_1-1} C_{n_2-1} \cdots C_{n_k-1} \, .
$$

\noindent % Note that there exists at least an $i$ with $n_i\ge \frac{n}{k}$. So
Since $n_i \geq \frac{n}{k}$ for some $i$, we have that
\begin{align*}
    |B_{n,k}|
 &\le k \, 2^k \sum_{\substack{n_1 + \cdots + n_k = n \\ n_i \ge 1,\, 2\le i\le k, \; n_1 \ge n/k}}
    C_{n_1-1} C_{n_2-1} \cdots C_{n_k-1}
    \\
 &= k \, 2^k \sum_{\substack{n_2 + \cdots + n_k \le n-(n/k)\\ n_i \ge 1,\, 2\le i\le k}}
    C_{n-n_2-\cdots-n_k-1} C_{n_2-1} \cdots C_{n_k-1} \, .
\end{align*}

\noindent Recall that (\cite{DR-BR1986}) $C_\ell < \frac{4^\ell}{(\ell+1)\, (\pi \ell)^{1/2}}$ for all $\ell$. So $C_{n-n_2-\cdots-n_k-1} < \frac{4^{n-n_2-\cdots-n_k-1}}{(n-n_2-\cdots-n_k)\, (\pi (n-n_2-\cdots-n_k-1))^{1/2}}$, which is bounded by $\frac{4^{n-n_2-\cdots-n_k-1}}{\frac{n}{k}\, (\pi (\frac{n}{k}-1))^{1/2}}$ if $n_2 + \cdots + n_k \le n - \frac{n}{k}$. Accordingly,
$$
|B_{n,k}|
\le
\frac{k \, 2^k \, 4^{n-1}}{\frac{n}{k}\, (\pi (\frac{n}{k}-1))^{1/2}}
\sum_{\substack{n_2 + \cdots + n_k \le n-(n/k) \\ n_i \ge 1,\, 2\le i\le k}} \frac{C_{n_2-1}}{4^{n_2}} \cdots \frac{C_{n_k-1}}{4^{n_k}}
\le
\frac{k \, 2^k \, 4^{n-1}}{\frac{n}{k}\, (\pi (\frac{n}{k}-1))^{1/2}} \Big( \sum_{m=1}^\infty \frac{C_{m-1}}{4^m} \Big)^{k-1} .
$$

\noindent Recall that the generating function of $C_\ell$ is
$$
\sum_{\ell=0}^\infty C_\ell x^\ell
=
\frac{1 - 2\sqrt{1-x}}{2 x},
\qquad
x \in \Big[ - \frac{1}{4}, \, \frac{1}{4} \Big],
$$

\noindent from which it follows that $\sum_{\ell=0}^\infty \frac{C_\ell}{4^{\ell+1}} = \frac12$. Hence
$$
|B_{n,k}|
\le
\frac{k \, 2^k \, 4^{n-1}}{\frac{n}{k}\, (\pi (\frac{n}{k}-1))^{1/2}} \, \frac{1}{2^{k-1}}
=
\frac{2k \, 4^{n-1}}{\frac{n}{k}\, (\pi (\frac{n}{k}-1))^{1/2}} .
$$

\noindent Since $0<k<\frac{n}{2}$, we have $(\pi (\frac{n}{k}-1))^{1/2} = (\frac{\pi n}{k})^{1/2} \, (1-\frac{k}{n})^{1/2} \ge (\frac{\pi n}{2k})^{1/2}$, so that $|B_{n,k}| \le \frac{2^{3/2}k^{5/2} \, 4^{n-1}}{\pi^{1/2}n^{3/2}}$ as desired.
\end{proof}

%\begin{lem}
%For every $\lambda \in(0, 1),$
%\[p^{(2n)}_\lambda(o,o)\asymp
%  \left(\frac{2\sqrt{\lambda}}{1+\lambda}\right)^{2n}\frac{1}{n^{3d/2}};\]
%and $\rho(\lambda)=\frac{2\sqrt{\lambda}}{1+\lambda}<1$ is strictly increasing in $\lambda.$
%\end{lem}

\begin{proof}[Proof of Theorem~\ref{T:hkZd}]

\textbf{Step 1.} We first show the lower bound for $p^{(2n)}_\lambda(o,\, o)$.

Let $n>d$. We get a lower bound for $p^{(2n)}_\lambda(o,\, o)$ by considering only the paths starting at $o$ that reach $(1, \ldots, 1)$ at step $d$ (which happens with probability greater than or equal to $(\frac{1}{d(1+\lambda)})^d$), then stay in the first open orthant $\{ x = (x_1, \ldots, x_d) \in \Z^d: \, x_i > 0, \; 1 \le i \le d\}$ for the next $2(n-d)$ steps and end up at $(1, \ldots, 1)$ again (of which we are going to estimate the probability), and finally return to $o$ at step $2n$ (which happens with probability greater than or equal to $(\frac{\lambda}{d(1+\lambda)})^d$). To compute the probability that, starting at $(1, \ldots, 1)$, the walk stays in the first open orthant for $2(n-d)$ steps and ends up at $(1, \ldots, 1)$, we observe, by decomposing the paths into excursions as in the proof of Lemma \ref{Lem5.1}, that the total number of possible such paths is at least
$$
\sum_{\substack{n_1 + \cdots + n_d = n - d \\ n_i \ge 0,\; 1\le i\le d}} {2n \choose 2 n_1,\cdots, 2n_d} C_{n_1} \cdots C_{n_d} ,
$$

\noindent where $C_\ell$ denotes as before the Catalan number, and ${2n \choose 2 n_1,\cdots, 2n_d} := \frac{(2n)!}{(2n_1)! \cdots (2n_d)!}$ is the multinomial coefficient. By definition, the transition probability that $\RW_{\lambda}$, along such paths, steps forward (resp.\ backward) along each coordinate in the first open
orthant is $\frac{1}{d(1+\lambda)}$ (resp.\ $\frac{\lambda}{d(1+\lambda)}$), with the number of both forward and backward steps being $n-d$. Consequently,
\begin{align*}
    p^{(2n)}_\lambda(o,\, o)
&\ge \left(\frac{1}{d(1+\lambda)}\right)^d  \left(\frac{\lambda}{d(1+\lambda)}\right)^d\\
&\ \ \ \
    \sum_{\substack{n_1 + \cdots + n_d = n - d \\ n_i \ge 0,\; 1\le i\le d}} {2n \choose 2 n_1,\cdots, 2n_d} C_{n_1} \cdots C_{n_d} \left(\frac{1}{d(1+\lambda)}\right)^{n-d} \left(\frac{\lambda}{d(1+\lambda)}\right)^{n-d}\\
 &= \frac{\lambda^n}{[d(1+\lambda)]^{2n}} \sum_{\substack{n_1 + \cdots + n_d = n - d \\ n_i \ge 0,\; 1\le i\le d}} {2n \choose 2 n_1,\cdots, 2n_d} C_{n_1} \cdots C_{n_d} .
\end{align*}

\noindent Since $C_\ell = \frac{1}{\ell+1} {2\ell \choose \ell}$, and $(n_1+1) \cdots (n_d +1) \le (n+1)^d$, we get
$$
p^{(2n)}_\lambda(o,\, o)
\ge
\frac{\lambda^n}{[d(1+\lambda)]^{2n}} \frac{1}{(n+1)^d} \sum_{\substack{n_1 + \cdots + n_d = n - d \\ n_i \ge 0,\; 1\le i\le d}} {2n \choose 2 n_1,\cdots, 2n_d} {2n_1 \choose n_1} \cdots {2n_d \choose n_d}
$$

\noindent Note that $\sum_{\substack{n_1 + \cdots + n_d = n - d \\ n_i \ge 0,\; 1\le i\le d}} {2n \choose 2 n_1,\cdots, 2n_d} {2n_1 \choose n_1} \cdots {2n_d \choose n_d}$ equals $(2d)^{2n-2d} \, q^{(2n - 2 d)}(o, \, o)$, where $q^{(2k)}(o,\, o)$ is the $(2k)$-step transition probability, from $o$ to $o$, of the (unbiased) simple random walk on $\Z^d$. Since $k^{d/2} q^{(2k)}(o,\, o)$ converges, as $k\to \infty$, to a strictly positive limit (\cite[Theorem~1.2.1]{LG1991}), it follows that for some constant $c_1>0$ (depending on $d$ and on $\lambda$) and all sufficiently large $n$,
$$
p^{(2n)}_\lambda(o,\, o)
\ge
c_1 \, \Big( \frac{2\lambda^{1/2}}{1+\lambda} \Big)^{\! 2n} \frac{1}{(n+1)^d\, n^{d/2} } ,
$$

\noindent yielding the desired lower bound for $p^{(2n)}_\lambda(o,\, o)$.

\textbf{Step 2.} It remains to prove the upper bound for $p^{(2n)}_\lambda(o,o)$.

Let $\sP_{2n}$ be the set of paths from $o$ to $o$ on $\Z^d$ with length $2n$. For $\gamma := o \, \omega_1 \, \omega_2 \cdots \omega_{2n-1} \, o \in \sP_{2n}$, let
$$
\P(\gamma, \, \lambda)
:=
p_{\lambda}(o,\, \omega_1) \, p_{\lambda}(\omega_1, \, \omega_2) \ldots p_{\lambda}(\omega_{2n-1}, \, o) ,
$$

\noindent which stands for the transition probability of $\RW_{\lambda}$ along $\gamma$. Define, for $1\le i\le d$, the projection $\phi_i$: $\Z^d \to \Z$ by $\phi_i(y) := y_i$ for $y:= (y_1,\ldots,y_d)\in \Z^d$. Let $\gamma_i$ be the path on $\mathbb{Z}$ obtained from $\phi_i(\gamma) := \phi_i(o)\, \phi_i(\omega_1)\cdots \phi_i(\omega_{2n-1})\, \phi_i(o)$ by deleting all null moves. Let $n(\gamma)$ and $n(\gamma_i)$ be respectively the numbers of hits (but excluding the initial hit) to the axial hyperplanes of $\gamma$ and $\gamma_i$; hence $n(\gamma)$ and $n(\gamma_i)$ are odds numbers, with $n(\gamma) \ge 2$ (due to the initial and ending positions), and
$$
n(\gamma) \ge n(\gamma_1) + \cdots + n(\gamma_d).
$$

Consider the first $2n$ steps of $\RW_{\lambda}$ along the path $\gamma$. Each time the walk is inside some open orthant, the transition probability for the next step is either $\frac{1}{d (1+\lambda)}$ or $\frac{\lambda}{d(1+\lambda)}$, whereas each time it hits an axial hyperplane (which happens $n(\gamma)$ times by definition), the transition probability is of the form $\frac{1}{d + k + (d-k)\lambda}$ (with $1\le k\le d$) or $\frac{\lambda}{d+k + (d-k)\lambda}$ (with $1\le k\le d-1$). Note that $d+k + (d-k)\lambda \ge d ( 1 +\lambda) + 1 - \lambda$. The total number of probability terms of the forms $\frac{1}{d + k + (d-k)\lambda}$ or $\frac{1}{d (1+\lambda)}$ is exactly $n$, so is the total number of probability terms of the forms $\frac{\lambda}{d + k + (d-k)\lambda}$ or $\frac{\lambda}{d (1+\lambda)}$. Therefore, writing $\eta := \frac{d(1 + \lambda)}{d (1 + \lambda) + 1 - \lambda} \in (0, \, 1)$, we get, for $\gamma \in \sP_{2n}$,
\begin{equation}
    \label{e:Pgammalambda}
    \P(\gamma, \, \lambda)
    \le
    \eta^{n(\gamma)} \left( \frac{1}{d (1+\lambda)} \right)^{\! n} \left( \frac{\lambda}{d (1+\lambda)} \right)^{\! n}
    \le
    \eta^{n(\gamma_1) + \cdots + n(\gamma_d)}
    \Big( \frac{\sqrt{\lambda}}{d(1 + \lambda)} \Big)^{\! 2n}.
\end{equation}

Let $\sP_{2n}^0 \subset \sP_{2n}$ be the set of paths $\gamma$ that is contained in the hyperplane $\{ (x_1, \ldots, x_d)\in \Z^d: \ x_i = 0\}$ for some $1 \le i \le d$. By definition, $n(\gamma) = 2n$ for $\gamma \in \sP_{2n}^0$. Since $\# \sP_{2n}^0 \le \# \sP_{2n} \le (2d)^{2n}$, we have
\begin{equation}
    \label{e:P2n0}
    \sum_{\gamma \in \sP_{2n}^0} \P(\gamma, \, \lambda)
    \le
     (2d)^{2n} \eta^{2n} \left( \frac{1}{d (1+\lambda)} \right)^{\! n} \left( \frac{\lambda}{d (1+\lambda)} \right)^{\! n}
     =
     \Big( \frac{2 \sqrt{\lambda}}{1+\lambda} \Big)^{\! 2n} \eta^{2n}.
\end{equation}

We now consider the case $\gamma \in \sP_{2n}\setminus\sP_{2n}^0$. By Lemma \ref{Lem5.1},
\begin{align*}
    \sum_{\gamma \in \sP_{2n}\setminus\sP_{2n}^0}
    \eta^{n(\gamma_1) + \cdots + n(\gamma_d)}
 &\le c \sum_{\substack{n_1 + \cdots + n_d = n \\ n_i \ge 1, \; 1\le i\le d}} {2n \choose 2n_1, \ldots, 2n_d}
    \prod_{j=1}^d \sum_{k_j = 1}^{n_j} \eta^{k_j}
    \frac{k_j^{5/2} 4^{n_j}}{n_j^{3/2}}
    \\
 &\le c \left(\sum_{k=1}^{\infty} \eta^k k^{5/2}\right)^{\! d}
    4^n \sum_{\substack{n_1 + \cdots + n_d = n \\ n_i \ge 1, \; 1\le i\le d}} {2n \choose 2n_1, \ldots, 2n_d}
    \prod_{j=1}^d n_j^{-3/2}\, .
\end{align*}

\noindent In view of \eqref{e:Pgammalambda}, we obtain, with $c_2 := c (\sum_{k=1}^{\infty} \eta^k k^{5/2})^d <\infty$,
\begin{equation}
    \sum_{\gamma \in \sP_{2n}\setminus\sP_{2n}^0}
    \P(\gamma, \, \lambda)
    \le
    c_2 \, \Big( \frac{2\sqrt{\lambda}}{d(1 + \lambda)} \Big)^{\! 2n}
    \sum_{\substack{n_1 + \cdots + n_d = n \\ n_i \ge 1, \; 1\le i\le d}} {2n \choose 2n_1, \ldots, 2n_d}
    \prod_{j=1}^d n_j^{-3/2}\, .
    \label{e:Pgammalambda2}
\end{equation}

\noindent To study the expression on the right-hand side, we consider (unbiased) simple random walk on $\Z^d$, and let $S_i$ be the number of steps among the first $2n$ steps that are taken in the $i$-th coordinate. For $n_1+\cdots+n_d=n$ with $n_i\in\mathbb{Z}_{+}$ (for all $i$),
$$
\P (S_i = 2 n_i, \ 1\le i\le d)
=
d^{-2n} {2n \choose 2n_1, \ldots, 2n_d}.
$$

\noindent By \cite[Lemma 1.4]{PG2015}, there exist constants $c_3>0$ and $c_4>0$, depending only on $d$, such that
$$
\sum_{\substack{n_1+\cdots + n_d=n \\ \exists n_i \not\in [\frac{n}{d},\, \frac{3n}{d}]}} \P (S_i = 2 n_i, \ 1\le i\le d)
\le
c_3\exp(-c_4n).
$$

\noindent Hence
\begin{align*}
 &d^{-2n} \sum_{\substack{n_1 + \cdots + n_d = n \\ n_i \ge 1, \; 1\le i\le d}} {2n \choose 2n_1, \ldots, 2n_d}
    \prod_{j=1}^d n_j^{-3/2}
    \\
 &\le c_3\exp(-c_4n)
    +
    \sum_{\substack{n_1+\cdots + n_d=n \\ n_i \in [\frac{n}{d},\, \frac{3n}{d}], \; 1\le i\le d}} \P (S_i = 2 n_i, \ 1\le i\le d)
    \prod_{j=1}^d n_j^{-3/2} \, .
\end{align*}

\noindent Consider the sum on the right-hand side. Since $n_i \in [\frac{n}{d},\, \frac{3n}{d}]$ for all $1\le i\le d$, we argue that $\prod_{j=1}^d n_j^{-3/2} \le (\frac{d}{n})^{3d/2}$, so the sum is bounded by $(\frac{d}{n})^{3d/2} \sum_{\substack{n_1+\cdots + n_d=n \\ n_i \in [\frac{n}{d},\, \frac{3n}{d}], \; 1\le i\le d}} \P (S_i = 2 n_i, \ 1\le i\le d) \le (\frac{d}{n})^{3d/2}$. Consequently,
$$
d^{-2n} \sum_{\substack{n_1 + \cdots + n_d = n \\ n_i \ge 1, \; 1\le i\le d}} {2n \choose 2n_1, \ldots, 2n_d}
\prod_{j=1}^d n_j^{-3/2}
\le
c_3\exp(-c_4n)
+
(\frac{d}{n})^{3d/2}
\le
c_5\, n^{-3d/2} \, ,
$$

\noindent for some constant $c_5>0$ depending on $d$. Going back to \eqref{e:Pgammalambda2}, we obtain
$$
\sum_{\gamma \in \sP_{2n}\setminus\sP_{2n}^0}
\P(\gamma, \, \lambda)
\le
c_2 c_5 \Big( \frac{2\sqrt{\lambda}}{1 + \lambda} \Big)^{\! 2n} \, n^{-3d/2}.
$$

\noindent In view of \eqref{e:P2n0}, and since $\eta <1$ and $p^{(2n)}_{\lambda}(o, \, o) = \sum_{\gamma \in \sP_{2n}} \P(\gamma, \, \lambda)$, this yields the desired upper bound for $p^{(2n)}_{\lambda}(o, \, o)$.
%\textbf{Step 3.} By Steps 1-2, the spectral radius
%\[\rho(\lambda)=\limsup_{n\rightarrow\infty}\sqrt[n]{p^{(n)}_{\lambda}(o,o)} =\frac{2\sqrt{\lambda}}{1+\lambda}<1\]
%is strictly increasing in $\lambda\in (0,1)$.
\end{proof}

%Recall that $\{\bfe_1,\ldots,\bfe_d\}$ is the standard basis in $\Z^d$ and $\mu$ is the probability measure on $\Z^d$ such that
%$$
%\mu(\bfe_1) = \cdots = \mu(\bfe_d) = \frac{1}{d(1+\lambda)}, \quad \mu(-\bfe_1) = \cdots = \mu(-\bfe_d) = \frac{\lambda}{d(1+\lambda)}.
%$$

Let $\sX := \{ (x_1, \ldots, x_d) \in \Z^d: \, x_i =0 \hbox{ \rm for some } i\}$.

\begin{lem}
 \label{P:axialplane}

 Almost surely, $\RW_{\lambda}$ with $\lambda\in (0, \, 1)$ visits $\sX$ only finitely many times.

\end{lem}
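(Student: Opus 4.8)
The plan is to reduce the statement to a one-dimensional problem: since $\sX=\bigcup_{i=1}^d\{x\in\Z^d: x_i=0\}$ is a finite union of axial hyperplanes, it suffices to show that for each fixed $i$ the $i$-th coordinate $X^{(i)}_n:=\phi_i(X_n)$ of $\RW_\lambda$ equals $0$ for only finitely many $n$, almost surely. To analyse a single coordinate I would watch the walk only along the random times $\tau_1<\tau_2<\cdots$ at which it takes a step in direction $\pm\bfe_i$, and study the embedded process $Y_m:=X^{(i)}_{\tau_m}$, together with the sojourn structure $\{n:X_n^{(i)}=0\}\subseteq\bigcup_{m:\,Y_m=0}[\tau_m,\tau_{m+1})$.

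The first main step is to identify the law of $(Y_m)$. The crucial point is that, at every state $x$, the two transition probabilities $p_\lambda(x,x\pm\bfe_i)$ carry the common denominator $d_x+(\lambda-1)d_x^-$ appearing in \eqref{(1.1)}; hence, conditioning on the next step being in direction $\pm\bfe_i$ cancels this denominator and produces a conditional law that depends on $x$ only through $\operatorname{sign}(x_i)$. Since $x_i$ is unchanged between consecutive $\tau_m$, this identifies $(Y_m)$ as a nearest-neighbour Markov chain on $\Z$ that is symmetric at $0$ and, whenever $x_i\neq0$, steps outward (away from $0$) with probability $\frac{1}{1+\lambda}>\frac12$ and inward with probability $\frac{\lambda}{1+\lambda}$. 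I would also record here that every $\tau_m$ is finite almost surely: at each step the chance of a $\pm\bfe_i$-move is bounded below by $\tfrac{1}{2d}>0$ uniformly in the current state, so the conditional Borel--Cantelli lemma guarantees infinitely many such moves.

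The second step is the elementary fact that an outward-biased walk returns to $0$ only finitely often. Passing to $|Y_m|$, which is a nearest-neighbour walk on $\Z_{\ge0}$ reflected at $0$ with upward probability $\frac{1}{1+\lambda}$, a gambler's-ruin computation gives that starting from $1$ it reaches $0$ with probability $\lambda<1$; hence the number of visits of $(Y_m)$ to $0$ is stochastically dominated by a geometric variable and is finite almost surely. Combined with the sojourn inclusion above and the finiteness of each gap $\tau_{m+1}-\tau_m$, this makes $\{n:X_n^{(i)}=0\}$ a finite union of finite intervals, hence finite; taking the union over $i=1,\dots,d$ proves the lemma.

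The hard part will be the rigorous justification in the first step that the time-changed coordinate process is genuinely Markov with transitions \emph{independent} of the positions of the other coordinates. Everything rests on the denominator cancellation in \eqref{(1.1)}, which is exactly what decouples the direction of a $\pm\bfe_i$-move from the ambient configuration and reduces the conditional law to its dependence on $\operatorname{sign}(x_i)$; once this is in place, the treatment of the initial segment $[0,\tau_1)$ and of the finitely many sojourns at $0$ is routine.
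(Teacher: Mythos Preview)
Your argument is correct and takes a genuinely different route from the paper. The paper works with the whole boundary $\sX$ at once: it folds the walk into $\Z_+^d$, decomposes time into excursions away from $\sX$, and uses the fact that inside the open orthant the walk coincides with a $d$-dimensional drifted random walk to obtain a uniform probability $1-q>0$ that an excursion never returns, then applies Borel--Cantelli. You instead handle the hyperplanes one at a time: the observation that in \eqref{(1.1)} the ratio $p_\lambda(x,x+\bfe_i):p_\lambda(x,x-\bfe_i)$ depends only on $\operatorname{sign}(x_i)$ lets you extract, via the time-change at the $\pm\bfe_i$-move times, an \emph{exact} one-dimensional outward-biased nearest-neighbour chain, whose transience is classical gambler's ruin. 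Your reduction is more elementary and yields the sharper statement that each individual coordinate vanishes only finitely often (with a geometric tail on the number of zero-visits of the embedded chain); the paper's excursion argument, on the other hand, is more robust in that it would transfer to situations where the exact denominator cancellation fails but a uniform escape probability from the boundary is still available. The point you flag as ``hard'' --- that the increment $Y_{m+1}-Y_m$ is conditionally independent of the full state $X_{\tau_m}$ given $\operatorname{sign}(Y_m)$ --- is indeed the crux, and it holds precisely because $X^{(i)}$ is frozen on $[\tau_m,\tau_{m+1})$ while the conditional $\pm$-law at every intermediate state depends only on that frozen sign; this also makes the initial segment $[0,\tau_1)$ harmless, as you note.
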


\begin{proof} In dimension $d=2$, the lemma is a consequence of \cite[Proposition 2.1]{KI-MV1998}, whose proof relies on properties of Riemann surfaces, and does not seem to be easily extended to higher dimensions.

% Our proof is valid in any dimension $d\ge 2$.
Let $\lambda\in (0,\, 1)$. Let $(X_n)_{n=0}^{\infty} := ( (X_n^1, \ldots, X_n^d ))_{n=0}^{\infty}$ be $\RW_{\lambda}$ on $\Z^d$. Write $Y_n := (|X_n^1|, \ldots, |X_n^d|)$ for $n\in\mathbb{Z}_{+}$. Then $(Y_n)_{n=0}^{\infty}$ is a Markov chain on the first orthant $\Z^d_+$. Define
$$
\sigma_1
:=
\inf \{ n > 0: \ Y_n \in \Z^d_+ \setminus \sX \},
\qquad
\tau_1
:=
\inf \{ n > \sigma_1 : \ Y_n \in \sX \},
$$

\noindent and recursively for $i\ge 2$,
$$
\sigma_i
:=
\inf \{n > \tau_{i-1} :\ Y_n \in \Z^d_+ \setminus \sX \},
\qquad
\tau_i
:=
\inf \{n> \sigma_i:\ Y_n \in \sX \} ,
$$

\noindent with the convention that $\inf \emptyset := \infty$. Let
$(\sF_n)_{n=0}^{\infty}$ be the filtration generated by $(Y_n)_{n=0}^{\infty}$, i.e., $\sF_n := \sigma(Y_1, \ldots, Y_n)$. We claim that
\begin{enumerate}[{\bf (i)}]
  \item For any $i>1$, conditioned on $\{\tau_{i-1}<\infty\}$ and $\sF_{\tau_{i-1}}$, $\sigma_i < \infty$ a.s.
  \item There exists a constant $0<q<1$ such that for any $i\ge 1$, $\P (\tau_i<\infty \, |\, \sigma_i<\infty, \; \sF_{\sigma_i} ) \le q$.
\end{enumerate}

Indeed, conditionally on $\{\tau_{i-1} < \infty\}$ and $\sF_{\tau_{i-1}}$, $(Y_{\tau_{i-1} + n})_{n=0}^{\infty}$ is a Markov chain starting at $Y_{\tau_{i-1}}$ with the same transition probability as that of $(Y_n)_{n=0}^{\infty}$. At each step, the transition probability from a state in $\sX \setminus \{o\}$ to another state in $\sX$ is $\frac{d-k + (d-k) \lambda}{d+k + (d-k) \lambda}$ for some $1 \le k \le d-1$, which is at most $\frac{(d-1) (1+\lambda)}{(d-1) (1+\lambda) + 2} < 1$. Since the number of visits to $o$ in the first $2n$ steps is at most $n$, we have
\begin{align*}
 &\P (\sigma_i-\tau_{i-1}>2n\, | \, \tau_{i-1}<\infty, \; \sF_{\tau_{i-1}})
    \\
 &=\P (Y_{\tau_{i-1}+k}\in \sX \ \text{for}\ 1 \le k \le 2n \, | \,\tau_{i-1}<\infty, \; \sF_{\tau_{i - 1}} )
   \le \left( \frac{(d-1) (1+\lambda)}{(d-1) (1+\lambda) + 2}\right)^n.
\end{align*}

\noindent We get (i) by sending $n$ to $\infty$.

Let $(Z_n)_{n=0}^{\infty}$ be a drifted random walk on $\Z^d$, starting inside the first open orthant, with the step distribution $\mu$ given by $\mu(\bfe_1) = \cdots = \mu(\bfe_d) = \frac{1}{d(1+\lambda)}$ and $\mu(-\bfe_1) =  \cdots =\mu(-\bfe_d) = \frac{\lambda}{d(1+\lambda)}$ (where $\{\bfe_1, \ldots, \bfe_d\}$ is the standard basis in $\Z^d$). Let $\tau := \inf \{n\geq 0:\ Z_n\in\sX \}$. Since the walk has a constant drift whose components are all strictly positive, $\P(\tau<\infty) \le q <1$ where $q$ depends on $d$ and $\lambda$.

Conditioned on $\sigma_i<\infty$ and $\sF_{\sigma_i}$, $(Y_{\sigma_i+n}, \, 0\le n<\tau_i-\sigma_i)$ has the same distribution as $(Z_n, 0\le n<\tau)$. Now (ii) follows readily.

By (i) and (ii), for $i\ge 2$, $\P(\tau_i < \infty) \le q \, \P(\tau_{i-1} < \infty)$, hence $\P(\tau_i < \infty) \le q^i$. The Borel--Cantelli lemma implies that a.s.\ there are only finitely many $i$'s such that $\tau_i < \infty$. Let $m$ be the largest one. The total number of visits to $\sX$ of $(Y_n)_{n=0}^{\infty}$ is $\sigma_1 + (\sigma_2 - \tau_1) + \cdots + (\sigma_m - \tau_{m-1})$, which is a.s.\ finite.
\end{proof}

%Now we are ready to state and prove the strong law of large numbers for $\RW_\lambda$ on $\Z^d$.

\begin{thm}\label{T:speed}
Let $\lambda \in (0, \, 1)$ and let $(X_n)$ be $\RW_\lambda$ on $\Z^d$. Then
%Almost surely, as $n \to \infty,$
$$
\lim_{n\to\infty} \frac{1}{n} \left(\left|X_n^1\right|,
\ldots, \left|X_n^d\right|\right) = \frac{1-\lambda}{1+\lambda}
     \left(\frac1d, \, \ldots, \, \frac1d \right) %\quad \text{and }
     % \lim_{n \to \infty} \frac{|X_n|}{n}= \frac{1-\lambda}{1+\lambda}
\qquad
\hbox{\rm a.s.}
$$

\noindent In particular, the speed $\sS(\lambda)=\frac{1-\lambda}{1+\lambda}$ of $\RW_{\lambda}$ is positive and strictly decreasing in $\lambda \in (0,\, 1)$.

\end{thm}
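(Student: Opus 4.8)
The plan is to prove the coordinate-wise statement $\frac1n(|X_n^1|,\ldots,|X_n^d|)\to v$ with $v:=\frac{1-\lambda}{1+\lambda}(\frac1d,\ldots,\frac1d)$, since the speed assertion then follows by summing the $d$ coordinates ($|X_n|=\sum_i|X_n^i|$ gives $|X_n|/n\to d\cdot\frac{1-\lambda}{d(1+\lambda)}=\frac{1-\lambda}{1+\lambda}$), and monotonicity in $\lambda$ is immediate from the formula. Following Lemma~\ref{P:axialplane} I would work with the folded process $Y_n:=(|X_n^1|,\ldots,|X_n^d|)$ on $\Z^d_+$. The key structural observation is that whenever $Y_n\in\Z^d_+\setminus\sX$ (equivalently, $X_n$ lies in an open orthant, so $d_{X_n}=2d$ and $d_{X_n}^-=d$), a step $\pm\bfe_j$ changes $|X_n^j|$ by $+1$ with probability $\frac{1}{d(1+\lambda)}$ and by $-1$ with probability $\frac{\lambda}{d(1+\lambda)}$; that is, on the interior $Y$ moves exactly like the drifted random walk $(Z_n)$ with step law $\mu$ of \eqref{e:mu}. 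In particular each coordinate has drift $\mu(\bfe_i)-\mu(-\bfe_i)=\frac{1-\lambda}{d(1+\lambda)}=v_i$.

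First I would record the classical input: for a drifted walk $(Z_n)$ with i.i.d.\ bounded increments of law $\mu$, the strong law of large numbers gives $\frac1n Z_n\to v$ almost surely, from any starting point. The task is to transfer this to $Y$, whose increments are i.i.d.\ only while $Y$ stays in the interior. By Lemma~\ref{P:axialplane}, $Y$ visits $\sX$ only finitely often almost surely, so eventually $Y$ is confined to a single open orthant and thereafter evolves as a drifted walk conditioned to avoid $\sX$.

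To make this rigorous I would reuse the stopping times $\sigma_i,\tau_i$ from the proof of Lemma~\ref{P:axialplane}. Almost surely there is a largest index $m$ with $\tau_m<\infty$; then $\sigma_{m+1}<\infty$, $\tau_{m+1}=\infty$, and $Y_n\in\Z^d_+\setminus\sX$ for all $n\ge\sigma_{m+1}$, so up to a null set $\Omega=\bigcup_{k\ge1}\{\sigma_k<\infty,\ \tau_k=\infty\}$. On the event $\{\sigma_k<\infty,\ \tau_k=\infty\}$ the strong Markov property at the stopping time $\sigma_k$ shows that $(Y_{\sigma_k+n})_{n\ge0}$ is, conditionally on $\sF_{\sigma_k}$, a drifted walk $\tilde Z$ started at the interior point $Y_{\sigma_k}$, while $\{\tau_k=\infty\}$ is exactly the avoidance event $\{\tilde Z_n\notin\sX\ \forall\, n\ge1\}$. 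Since $\frac1n\tilde Z_n\to v$ holds a.s.\ unconditionally, the complementary event has probability zero, hence still probability zero after intersecting with the avoidance event; integrating against the law of $Y_{\sigma_k}$ via the strong Markov property and summing over $k$ then yields $\P(\frac1n Y_n\to v)=1$. Here I use that on this event $\sigma_k<\infty$, so shifting the index by the finite time $\sigma_k$ and the bounded offset $Y_{\sigma_k}$ does not affect the limit, making $\frac1n Y_n\to v$ equivalent to $\frac1n Y_{\sigma_k+n}\to v$.

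The one genuinely delicate point, and the step I expect to be the main obstacle, is that the last exit from $\sX$ occurs at a time that is \emph{not} a stopping time, so the post-exit increments are not literally i.i.d.; the decomposition over the stopping times $\sigma_k$ circumvents this, the crucial observation being that a statement holding almost surely for the unconditioned drifted walk automatically holds almost surely on any sub-event, in particular on the positive-probability event of permanently avoiding $\sX$. Everything else reduces to the classical strong law and the orthant bookkeeping already set up for Lemma~\ref{P:axialplane}, and the claimed speed $\sS(\lambda)=\frac{1-\lambda}{1+\lambda}$ follows at once.
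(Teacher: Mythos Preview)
Your argument is correct, but it follows a genuinely different route from the paper's. The paper never introduces a coupling or a last-exit decomposition; instead it computes the one-step conditional drift explicitly by defining functions $f_i(x)=\E[|X_{n+1}^i|-|X_n^i|\mid X_n=x]$ (which equal $\frac{1-\lambda}{d(1+\lambda)}$ off $\sX$ and take different values on $\sX$), observes that $|X_n^i|-\sum_{k<n}f_i(X_k)$ is a bounded martingale-difference sum, and applies the martingale strong law of large numbers. Lemma~\ref{P:axialplane} is then used only to argue that the Ces\`aro average $\frac1n\sum_{k<n}f_i(X_k)$ converges to $\frac{1-\lambda}{d(1+\lambda)}$, since the ``wrong'' values of $f_i$ occur only finitely often. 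Your approach trades the martingale SLLN for the classical i.i.d.\ SLLN at the cost of the stopping-time decomposition and coupling; the paper's approach is somewhat more robust (it would survive infinitely many but density-zero visits to $\sX$), while yours is arguably more elementary once Lemma~\ref{P:axialplane} is in hand.

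One phrasing point worth tightening: the sentence ``$(Y_{\sigma_k+n})_{n\ge0}$ is, conditionally on $\sF_{\sigma_k}$, a drifted walk $\tilde Z$'' is literally false---the conditional law of $(Y_{\sigma_k+n})$ is that of the folded biased walk, not of $\tilde Z$. What you are really using (and what the proof of Lemma~\ref{P:axialplane} already states) is that the two processes can be \emph{coupled} to agree until the first visit to $\sX$; under that coupling $\{\tau_k=\infty\}$ coincides with $\{\tilde Z_n\notin\sX\ \forall n\}$ and on that event $Y_{\sigma_k+n}=\tilde Z_n$ for all $n$. With that wording fixed, the rest of your logic (a null set for $\tilde Z$ remains null after intersecting with the avoidance event, then integrate over $\sF_{\sigma_k}$ and sum over $k$) is sound.
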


\begin{proof}
For simplicity, we only prove the theorem for $d=2$. Define functions $f_1$ and $f_2$ on $\Z^2$ by
$$
f_1(x)
:=
    \begin{cases}
      0, & x_1 = 0, \\
      \frac{1-\lambda}{3+\lambda}, & x_1 \neq 0, \ x_2 = 0, \\
      \frac{1-\lambda}{2(1+\lambda)}, & \text{otherwise},
    \end{cases}
\qquad
f_2(x)
:=
    \begin{cases}
      \frac{1-\lambda}{3+\lambda}, & x_1 = 0, \ x_2 \neq 0, \\
      0, & x_2 = 0, \\
      \frac{1-\lambda}{2(1+\lambda)}, & \text{otherwise}.
    \end{cases}
$$

\noindent It is easily seen that $( |X_n^1| - |X_{n-1}^1| - f_1(X_{n-1}), \, |X_n^2| - |X_{n-1}^2| - f_2(X_{n-1}) )_{n=1}^{\infty}$ is a martingale-difference sequence. By the strong law of large numbers (cf.\ \cite[Theorem 13.1]{LR-PY2016}),
$$
\lim_{n \to \infty} \frac1n \left( |X_n^1| - \sum_{k=0}^{n-1} f_1(X_k) \right)
=
\lim_{n \to \infty} \frac1n \left( |X_n^2| - \sum_{k=0}^{n-1} f_2(X_k) \right)
=
0
\qquad
\hbox{\rm a.s.}
$$

\noindent Since $|X_n| = |X_n^1| + |X_n^2|$, the theorem follows from Lemma \ref{P:axialplane} and the definitions of $f_1$ and $f_2$.
\end{proof}

\section{Intersections of two independent random walks}
\label{s:intersection}

In this section, we consider the number of intersections of two
independent drifted or biased random walks on $\Z^d$. As we mentioned in the introduction, these results are crucial in the forthcoming computation in Section~\ref{s:usf} of the number of trees in the uniform spanning forests of $\Z^d$.

\subsection{Intersections of drifted random walks: Proof of Theorem~\ref{T:intersection}}

%We prove Theorem~\ref{T:intersection}.
Let $(Z_n)_{n=0}^{\infty}$ and $(W_n)_{n=0}^{\infty}$ be two independent drifted random walks on $\Z^d$ with the same step distribution $\mu$ given by \eqref{e:mu}, starting at $z_0$ and $w_0$ respectively.

Without loss of generality, let us assume $z_0=w_0=0$. The expectation of the intersection number for $(Z_m)_{m=0}^{\infty}$ and $(W_n)_{n=0}^{\infty}$ is
\begin{equation}
    \label{e:expectation}
    \sum_{m=0}^{\infty} \sum_{n = 0}^{\infty}
    \P ( Z_m = W_n )
    =
    \sum_{m=0}^{\infty} \sum_{n=0}^{\infty} \sum_{x \in \Z^d}
    p^{(m)}(o,\, x) \, p^{(n)}(o,\, x),
\end{equation}

\noindent where $p^{(n)}(x,\, y)$ is the $n$-step transition probability for $(Z_m)_{m=0}^{\infty}$ from $x$ to $y$. By
\cite[Theorem 10.24]{LR-PY2016}, to prove Theorem \ref{T:intersection}, it suffices to prove that the sum on the right-hand side of \eqref{e:expectation} is finite if $d \ge 4$, and is infinite if $d \le 3$.

Let $\mathbf{m}$ and $\Sigma = (\Sigma_{ij})$ be respectively the mean and the covariance matrix of $\mu$.  Then $\mathbf{m} = \frac{1-\lambda}{d(1+\lambda)}(1, \ldots, 1)$ and $\Sigma_{ij} = \frac{1}{d} \delta_{ij} - \frac{(1-\lambda)^2}{d^2 (1+\lambda)^2}$ for $1\le i, \, j \le n$. By the local limit theorem (\cite[Theorem~2]{SC1966}),\begin{equation}
	\label{e:llt}
	p^{(n)}(o,\, x)
	=
	\frac{1}{(2 \pi n)^{d/2} (\det \Sigma)^{1/2}} \exp \left( - \frac{ (x - n \mathbf{m}) \cdot \Sigma^{-1} (x - n \mathbf{m})}{2n} \right) + o(n^{-d/2}),
\end{equation}

\noindent where $n^{d/2} o(n^{-d/2}) \to 0$ as $n \to \infty$ uniformly in $x \in \Z^d$. Since the largest eigenvalue of $\Sigma$ is $\frac1d$, we have
$$
(x - n \mathbf{m}) \cdot \Sigma^{-1} (x - n \mathbf{m})
\ge
d | x - n \mathbf{m}|
$$

\noindent for $x\in \Z^d$. The local limit theorem \eqref{e:llt} immediately implies the following result.

\begin{lem}
  \label{L:hkdriftupper}

{\bf (i)} There exists a constant $c>0$ such that
    \begin{equation}
      \label{e:hkup}
      \sup_{x\in \Z^d} p^{(n)}(0, \, x) \le
      c n^{-d/2},
      \qquad
      \forall n\ge 1.
    \end{equation}

  \label{L:hklower}

{\bf (ii)} For $\sigma>0$, define
 $$
 R_{n,\sigma}
 :=
 \{ x\in \Z^d :\ | x_i - \frac{1-\lambda}{d(1+\lambda)} n | \le \sigma n^{1/2},\ 1\le i\le d \}.
 $$

 \noindent Then there exists a constant $c>0$, depending on $\sigma$, $\lambda$ and $d$ such that for any $n\in\mathbb{N}$ with $R_{n,\sigma} \not= \varnothing$,
 \begin{equation}
     \label{e:hklower}
     p^{(n)}(0,\, x)
     \ge
     c n^{-d/2}
     \qquad
     \text{for $x \in R_{n,\sigma}$ with $n + |x|$ being even.}
 \end{equation}

\end{lem}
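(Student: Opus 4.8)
The plan is to read off both bounds directly from the local limit theorem \eqref{e:llt}. Write $p^{(n)}(o,\,x)=g_n(x)+\varepsilon_n(x)$, where
$$
g_n(x):=\frac{1}{(2\pi n)^{d/2}(\det\Sigma)^{1/2}}\exp\Big(-\frac{(x-n\mathbf{m})\cdot\Sigma^{-1}(x-n\mathbf{m})}{2n}\Big)
$$
is the Gaussian term and $\varepsilon_n(x)$ is the error, which by \eqref{e:llt} satisfies $\sup_{x\in\Z^d}|\varepsilon_n(x)|=o(n^{-d/2})$. Throughout I use that $\Sigma^{-1}$ is symmetric positive definite, so its quadratic form is nonnegative and bounded above by $\Lambda\,|v|^2$, where $\Lambda$ denotes the largest eigenvalue of $\Sigma^{-1}$ (equivalently, the reciprocal of the smallest eigenvalue of $\Sigma$).

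For part (i), since the exponent in $g_n$ is nonpositive, $g_n(x)\le (2\pi n)^{-d/2}(\det\Sigma)^{-1/2}=:C_d\,n^{-d/2}$ for every $x$. The uniform decay of the error furnishes an integer $N$ with $\sup_x|\varepsilon_n(x)|\le n^{-d/2}$ for all $n\ge N$, whence $p^{(n)}(o,\,x)\le (C_d+1)\,n^{-d/2}$ there; for the finitely many $n<N$ one uses the trivial bound $p^{(n)}(o,\,x)\le 1\le N^{d/2}n^{-d/2}$. Taking $c:=\max\{C_d+1,\,N^{d/2}\}$ gives \eqref{e:hkup}. (For $x$ unreachable in $n$ steps the claim is vacuous, as $p^{(n)}(o,\,x)=0$.)

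For part (ii), fix $\sigma>0$ and let $x\in R_{n,\sigma}$, so that $|x_i-n m_i|\le\sigma n^{1/2}$ with $\mathbf{m}=(m_1,\dots,m_d)$. Summing over $i$ yields $|x-n\mathbf{m}|^2\le d\sigma^2 n$, and hence $(x-n\mathbf{m})\cdot\Sigma^{-1}(x-n\mathbf{m})\le \Lambda d\sigma^2 n$. The exponent in $g_n$ is therefore at least $-\tfrac12\Lambda d\sigma^2$, a constant depending only on $\sigma,\lambda,d$, so $g_n(x)\ge c_1\,n^{-d/2}$ with $c_1:=C_d\exp(-\tfrac12\Lambda d\sigma^2)>0$. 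Choosing $N'$ so that $\sup_x|\varepsilon_n(x)|\le\tfrac{c_1}{2}n^{-d/2}$ for $n\ge N'$, we obtain $p^{(n)}(o,\,x)\ge\tfrac{c_1}{2}n^{-d/2}$ for all such $n$ and all $x\in R_{n,\sigma}$.

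It remains to absorb the finitely many small $n$ in part (ii). The parity condition ``$n+|x|$ even'' is exactly the requirement that $x$ be reachable from $o$ in $n$ steps, since each step changes $\sum_i x_i$ by $\pm1$ and $|x|=\sum_i|x_i|\equiv\sum_i x_i\pmod 2$; thus every $x\in R_{n,\sigma}$ with $n+|x|$ even satisfies $p^{(n)}(o,\,x)>0$. Since there are only finitely many pairs $(n,x)$ with $n<N'$ and $x\in R_{n,\sigma}$, the minimum $\delta$ of $p^{(n)}(o,\,x)$ over them is strictly positive, and as $n^{-d/2}\le 1$ we get $p^{(n)}(o,\,x)\ge\delta\ge\delta n^{-d/2}$; taking $c:=\min\{c_1/2,\,\delta\}$ yields \eqref{e:hklower}. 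The only genuinely delicate points are the uniform-in-$x$ control of $\varepsilon_n$ and the bookkeeping of $\Lambda=\|\Sigma^{-1}\|$ together with the relation between $R_{n,\sigma}$ and the quadratic form; both are elementary once \eqref{e:llt} is in hand, so I expect no serious obstacle.
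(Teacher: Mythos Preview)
Your approach matches the paper's, which simply asserts that both parts follow immediately from the local limit theorem \eqref{e:llt}; you have just spelled out the routine details.

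There is one small slip in your handling of the finitely many small $n$ in part (ii): you claim that ``$n+|x|$ even'' is \emph{exactly} the condition for $x$ to be reachable from $o$ in $n$ steps, but reachability of a nearest-neighbour walk also requires $|x|\le n$. For small $n$ and large $\sigma$ there can be lattice points $x\in R_{n,\sigma}$ with $n+|x|$ even yet $|x|>n$, for which $p^{(n)}(o,x)=0$; so your positivity argument for such pairs breaks down (and, read literally, the lemma itself fails at those pairs). The clean fix is to note that once $n$ exceeds a threshold $n_0=n_0(\sigma,\lambda,d)$ every $x\in R_{n,\sigma}$ automatically satisfies $|x|\le \frac{1-\lambda}{1+\lambda}\,n+d\sigma n^{1/2}\le n$, so parity alone does guarantee reachability; then simply take your $N'\ge n_0$ and drop the separate treatment of smaller $n$. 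This is harmless, since every downstream use of \eqref{e:hklower} is asymptotic.
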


We need another preliminary result.

\begin{lem}
 \label{L:concatenation}

 Let $\varepsilon > 0$. For any $n\in\mathbb{N}$, define
 $$
 Q_n(\varepsilon)
 :=
 \{ x = (x_1,\cdots, x_d) \in \Z^d :\ | x_i - \frac{1-\lambda}{d(1+\lambda)} n | < n^{(1+\varepsilon)/2},\ 1\le i\le d \}.
 $$

 \noindent Then there exists a constant $c>0$, depending on $\lambda\in (0, \, 1)$ and $d$, such that
 \begin{equation}
     \sum_{x \in \Z^d \setminus Q_n(\varepsilon)} p^{(n)}(0,\, x)
     \le
     2 d \exp (-c n^{\varepsilon} ),
     \qquad
     \forall n\in\mathbb{N}.
     \label{e:concentration}
 \end{equation}

\end{lem}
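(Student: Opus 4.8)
The plan is to reduce this multivariate tail estimate to a one-dimensional Chernoff bound, handled coordinate by coordinate. First I would exploit the structure of the drifted walk: the $i$-th coordinate $Z_n^i$ is itself a sum of i.i.d.\ increments. Indeed, projecting a single step of $\mu$ onto the $i$-th axis, each step contributes $+1$ with probability $\frac{1}{d(1+\lambda)}$, contributes $-1$ with probability $\frac{\lambda}{d(1+\lambda)}$, and contributes $0$ with the remaining probability $\frac{d-1}{d}$. Hence $Z_n^i = \sum_{k=1}^n \xi_k^{(i)}$, where the $\xi_k^{(i)}$ are i.i.d.\ in $k$ (the steps being independent), take values in $[-1,\,1]$, and have mean $\frac{1-\lambda}{d(1+\lambda)} = \mathbf{m}_i$. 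Note that we do not need any independence across coordinates, only this marginal structure for each fixed $i$.

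Next I would apply a union bound to decouple the coordinates. Since
$$
\Z^d \setminus Q_n(\varepsilon)
=
\bigcup_{i=1}^d \big\{ x \in \Z^d :\ |x_i - n \mathbf{m}_i| \ge n^{(1+\varepsilon)/2} \big\},
$$
we obtain
$$
\sum_{x \in \Z^d \setminus Q_n(\varepsilon)} p^{(n)}(0,\, x)
=
\P\big( Z_n \notin Q_n(\varepsilon) \big)
\le
\sum_{i=1}^d \P\big( |Z_n^i - n \mathbf{m}_i| \ge n^{(1+\varepsilon)/2} \big),
$$
so it suffices to bound the tail of a single coordinate.

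For the single coordinate I would invoke an exponential concentration bound. Since the centered increments $\xi_k^{(i)} - \mathbf{m}_i$ are i.i.d.\ with range at most $2$, Hoeffding's inequality gives $\P(|Z_n^i - n\mathbf{m}_i| \ge t) \le 2\exp(-t^2/(2n))$; taking $t = n^{(1+\varepsilon)/2}$ makes $t^2/(2n) = n^\varepsilon/2$, so each coordinate contributes at most $2\exp(-n^\varepsilon/2)$. Summing over $1 \le i \le d$ yields \eqref{e:concentration} with $c = \frac12$. If a self-contained argument is preferred, the same bound follows from the exponential Markov inequality together with the Hoeffding-lemma estimate $\E[e^{s(\xi_1^{(i)} - \mathbf{m}_i)}] \le e^{s^2/2}$, optimized at $s = t/n$.

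There is essentially no serious obstacle here: the statement is a routine large-deviation estimate, and the only point genuinely requiring care is the structural observation that each coordinate is a sum of i.i.d.\ bounded increments with the correct mean, after which the exponential bound applies verbatim. I would deliberately avoid trying to derive \eqref{e:concentration} by summing the Gaussian tails coming from the local limit theorem \eqref{e:llt}, since the additive error term there is controlled only pointwise and uniformly, so summing it over the (growing) deviation region is more delicate than, and ultimately reproves, the direct Chernoff computation.
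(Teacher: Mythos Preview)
Your proposal is correct and follows essentially the same route as the paper: a union bound over the $d$ coordinates followed by a Hoeffding-type exponential inequality for each coordinate sum, with $t=n^{(1+\varepsilon)/2}$. The paper phrases it as the Azuma--Hoeffding inequality (the martingale version), but since the coordinate increments are i.i.d.\ and bounded, your direct appeal to Hoeffding's inequality is equivalent, and you even recover the explicit constant $c=\tfrac12$.
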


\begin{proof}
By the Azuma--Hoeffding inequality, there exists a constant
$c_0>0$, depending only on $\lambda$ and $d$, such that
$$
\P ( \max_{1\le i\le d} |Z_n^i - \frac{1-\lambda}{d(1+\lambda)} n | \ge t )
\le
2 d \exp ( - \frac{c_0 t^2}{n} ),
\qquad
t > 0 ,
$$

\noindent where $Z_n^i$ is the $i$-th coordinate component of $Z_n\in\Z^d$. The lemma follows by taking $t = n^{(1+\varepsilon)/2}$.
\end{proof}

Now we are ready to prove Theorem \ref{T:intersection}.

\begin{proof}[Proof of Theorem \ref{T:intersection}]
\textbf{Case 1: $d \ge 4$.}

Fix a small enough $\varepsilon\in (0,\, 1)$. Let $n_{\varepsilon} := \max \{ n - \frac{2d (1+\lambda)}{1-\lambda} n^{(1+\varepsilon)/2}, \, 0\}$. Note that if $1\le m < n_{\varepsilon}$, then
$$
\frac{1 - \lambda}{d(1+\lambda)} n - n^{(1+\varepsilon)/2}
>
\frac{1-\lambda}{d(1+\lambda)}m + m^{(1+\varepsilon)/2}.
$$

\noindent This implies $Q_m(\varepsilon)\cap Q_n(\varepsilon) =\varnothing$. In particular,
$$
\sum_{n\in\mathbb{N}} \sum_{1\le m < n_{\varepsilon}}
\sum_{x \in \Z^d} p^{(m)}(0,\, x) p^{(n)}(0,\, x)
\le
\sum_{n\in\mathbb{N}} \sum_{1\le m < n_{\varepsilon}}
\left( \sum_{x \in \Z^d \setminus Q_n(\varepsilon)} + \sum_{x \in \Z^d \setminus Q_m(\varepsilon)} \right)
p^{(m)}(0,\, x) p^{(n)}(0,\, x) .
$$

\noindent By Lemmas \ref{L:hkdriftupper} and \ref{L:concatenation}, $\sum_{x \in \Z^d \setminus Q_n(\varepsilon)} p^{(m)}(0,\, x) p^{(n)}(0,\, x)$ and $\sum_{x \in \Z^d \setminus Q_m(\varepsilon)} p^{(m)}(0,\, x) p^{(n)}(0,\, x)$ are bounded by $2d \exp (-c_2 n^{\varepsilon} ) c_1m^{-d/2}$ and $2d \exp (-c_2 m^{\varepsilon} ) c_1 n^{-d/2}$, respectively. Hence
\begin{align*}
 &\sum_{n\in\mathbb{N}} \sum_{1\le m < n_{\varepsilon}}
\sum_{x \in \Z^d} p^{(m)}(0,\, x) p^{(n)}(0,\, x)
    \\
 &\le \sum_{n\in\mathbb{N}} \sum_{1\le m < n_{\varepsilon}}
    \Big( 2d \exp (-c_2 n^{\varepsilon} ) c_1m^{-d/2} + 2d \exp (-c_2 m^{\varepsilon} ) c_1 n^{-d/2} \Big)
    < \infty.
\end{align*}

\noindent On the other hand, by Lemma \ref{L:hkdriftupper},
\begin{align*}
    \sum_{n\in\mathbb{N}} \sum_{n_{\varepsilon}\le m\le n}
    \sum_{x \in \Z^d} p^{(m)}(0,\, x) p^{(n)}(0,\, x)
 &\le \sum_{n\in\mathbb{N}} \sum_{n_{\varepsilon}\le m\le n}
    \sum_{x \in \Z^d} p^{(m)}(0,\, x) c_1 n^{-d/2}
    \\
 &= \sum_{n\in\mathbb{N}} \sum_{n_{\varepsilon}\le m\le n}
c_1 n^{-d/2}
    \le
    \sum_{n\in\mathbb{N}} c_3 n^{- (d-1-\varepsilon)/2}
    <\infty.
\end{align*}

\noindent Moreover, by transience of $(Z_n)_{n=0}^{\infty}$,
$$
\sum_{n\in\mathbb{N}} \sum_{x \in \Z^d} p^{(1)}(0,\, x) p^{(n)}(0,\, x)
\le
\sum_{n\in\mathbb{N}} \sum_{x \in \Z^d} p^{(1)}(0,\, x) \, c_1 n^{-d/2}
=
\sum_{n\in\mathbb{N}} c_1 n^{-d/2}
<\infty.
$$

\noindent Assembling these pieces yields $\sum_{n\in\mathbb{N}} \sum_{m\in\mathbb{N}} \sum_{x \in \Z^d} p^{(m)}(0,\, x) p^{(n)}(0,\, x) <\infty$. A fortiori, we obtain $\sum_{m=0}^{\infty} \sum_{n = 0}^{\infty} {\bf 1}_{\{ Z_m = W_n\} } <\infty$ a.s., as desired.

\textbf{Case 2: $d \le 3.$}

By \eqref{e:hklower} in Lemma \ref{L:hkdriftupper}, there exist constants $c_4>0$ and $c_5>0$ such that
\begin{align*}
    \sum_{n=0}^{\infty}\sum_{m=0}^{\infty}
    \sum_{x \in \Z^d}
    p^{(n)}(0,\, x) p^{(m)}(0,\, x)
 &\ge \sum_{n=2}^{\infty} \sum_{n - n^{1/2}\le m\le n}
    \sum_{x \in R_{n,1}}
    p^{(n)}(0,\, x) p^{(m)}(0,\, x)
    \\
 &\ge \sum_{n=2}^{\infty} \sum_{n - n^{1/2}\le m\le n}
    \sum_{x \in R_{n,1}}
    c_4\, n^{-d/2} m^{-d/2}
    \\
 &\ge c_5 \sum_{n=2}^{\infty} \sum_{n - n^{1/2}\le m\le n}
    m^{-d/2}
    \ge
    c_5\sum_{n=2}^{\infty} n^{-\frac{d-1}{2}},
\end{align*}

\noindent which is infinity. By \cite[Theorem 10.24]{LR-PY2016}, $\sum_{m=0}^{\infty} \sum_{n = 0}^{\infty} {\bf 1}_{\{ Z_m = W_n\} } =\infty$ a.s.
\end{proof}

\subsection{Intersections of biased random walks}

For $x = (x_1, \ldots, x_d) \in \Z^d$, define $\phi(x) := (|x_1|, \ldots, |x_d|) \in \Z_{+}^d$. We start by studying the number of intersections of the reflecting random walks $(\phi(X_n))_{n=0}^{\infty}$ and $(\phi(Y_n))_{n=0}^{\infty}$,
%intersect infinitely often (i.o.),
where $(X_n)_{n=0}^{\infty}$ and $(Y_n)_{n=0}^{\infty}$ are independent $\RW_\lambda$'s on $\Z^d$. By Theorem~\ref{T:intersection}, with positive probability, the number of intersections is infinite if $d \le 3$, and is finite if $d \ge 4$. The Liouville property below will ensure that the probability
is indeed one.

\begin{lem}
  \label{L:poisson}
  Let $(X_n)_{n=0}^{\infty}$ be $\RW_{\lambda}$ on $\Z^d$ with $\lambda\in (0,\, 1)$. The Poisson boundary for $(\phi(X_n))_{n=0}^{\infty}$ is trivial, i.e., all bounded harmonic functions are constants.

\end{lem}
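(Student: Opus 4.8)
The plan is to establish the Liouville property through a total-variation mixing estimate rather than through a coalescing coupling. Write $Y_n := \phi(X_n) = (|X_n^1|, \ldots, |X_n^d|)$ for the reflecting walk (the chain $(Y_n)$ already appearing in Lemma~\ref{P:axialplane}), and denote by $\P_a^n$ its $n$-step law started from $a \in \Z_+^d$. For any bounded harmonic $h$ one has $h(a) = \E_a[h(Y_n)] = \sum_x \P_a^n(x)\, h(x)$, so $|h(a) - h(b)| \le \|h\|_\infty\, \|\P_a^n - \P_b^n\|_{\mathrm{TV}}$; hence it suffices to prove $\|\P_a^n - \P_b^n\|_{\mathrm{TV}} \to 0$ for all $a,b$ of equal parity $p(a) := \sum_i a_i \bmod 2$ (since $\sum_i (Y_n)_i \equiv p(a) + n \pmod 2$, equal parity is needed for the supports to overlap). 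The case $p(a) \ne p(b)$ then follows from a single use of harmonicity: every $\Z_+^d$-neighbour $a'$ of $a$ has $p(a') = p(b)$, so $h(a') = h(b)$ by the equal-parity case, whence $h(a) = \sum_{a'} P(a,a')\, h(a') = h(b)$.

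The structural input is Lemma~\ref{P:axialplane}: almost surely $(Y_n)$ visits $\sX$ only finitely often, so there is an a.s.\ finite last-reflection time $T$ after which every coordinate stays $\ge 1$ and the walk evolves exactly as the drifted random walk with step law $\mu$ of \eqref{e:mu}. Consequently, for large $n$, $Y_n = V + Z'_{\,n-T}$, where $V := Y_T$, $Z'$ is a free $\mu$-walk, and the pair $(T,V)$ is a tight (proper) random variable. Thus $\P_a^n$ is a mixture, over the law of $(T,V)$ under $\P_a$, of shifted free-walk laws.

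The remaining step is quantitative and rests on the local limit theorem \eqref{e:llt} together with the uniform bound \eqref{e:hkup} of Lemma~\ref{L:hkdriftupper}. For the free drifted walk, the mass at time $m$ is spread over a window of size $\asymp \sqrt m$ about $m\mathbf{m}$ with a uniform Gaussian profile, so a deterministic shift of bounded size perturbs the law by total variation $o(1)$. Fixing $\varepsilon > 0$, I would choose a compact $K$ with $\P_a((T,V)\in K),\ \P_b((T,V)\in K) > 1-\varepsilon$; on $K$ the time- and space-shifts are bounded, and the uniform LLT gives $\|\P_a^n - \P_b^n\|_{\mathrm{TV}} \le 2\varepsilon + o(1)$. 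Letting $n\to\infty$ and then $\varepsilon\to 0$ yields the claim.

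I expect the genuine obstacle to lie precisely here, and it is conceptual as much as technical. One cannot argue by a coalescing coupling: for $d\ge 3$ two independent copies of $(Y_n)$ are transient relative to one another, reflections off $\sX$ occur only finitely often (Lemma~\ref{P:axialplane}) and so cannot force the walks to meet; indeed they almost surely never coincide once both have left $\sX$. What makes the Liouville property hold regardless is that the escape direction is deterministic — by Theorem~\ref{T:speed} all coordinates $|X_n^i|/n$ share the single positive limit $\frac{1-\lambda}{d(1+\lambda)}$ — so that $\phi$ retains no asymptotic directional information and the fluctuations of scale $\sqrt n$ swamp any influence of the starting point. Equivalently, lifting a bounded $Y$-harmonic function to the sign-symmetric $X$-harmonic function $h\circ\phi$, the only surviving boundary datum for $X$ is the terminal orthant $\Theta\in\{\pm1\}^d$, on which the coordinate sign-flip group acts simply transitively, and symmetric functions of $\Theta$ are constant. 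The technical heart is thus to upgrade ``finitely many reflections'' into the uniform total-variation bound above, which requires controlling the tails of $T$ and $V$ and tracking parity throughout.
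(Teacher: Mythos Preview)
Your approach via total-variation mixing is viable and genuinely different from the paper's, but the step you yourself flag as the ``technical heart'' contains a real gap. The assertion ``$Y_n=V+Z'_{n-T}$, where $Z'$ is a free $\mu$-walk \ldots\ thus $\P_a^n$ is a mixture, over the law of $(T,V)$, of shifted free-walk laws'' is false in law: since $T$ is the \emph{last} visit to $\sX$ (hence not a stopping time), conditioning on $\{T=t,\,V=v\}$ forces the post-$T$ path to be a $\mu$-walk \emph{conditioned never to return to $\sX$}, i.e.\ a Doob $h$-transform of the free walk, not the free walk itself. The repair is to abandon the last-exit decomposition and condition instead at a deterministic time $N$. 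For $v\notin\sX$ the chains $Y$ and $Z$ agree until the first visit to $\sX$, so $\|\P_v^{Y,m}(\cdot)-p^{Z}_m(v,\cdot)\|_{\mathrm{TV}}\le 2q(v)$ with $q(v):=\P_v(\text{hit }\sX)$; and $\E_a[q(Y_N)]\le\P_a(Y_k\in\sX\text{ for some }k\ge N)<\varepsilon$ by Lemma~\ref{P:axialplane}. The free kernels $p^Z_m(v,\cdot)$ do mix in total variation within each parity class (by your LLT heuristic, or simply because every random walk on the abelian group $\Z^d$ is Liouville), and dominated convergence over the tight law of $Y_N$ then gives $\limsup_n\|\P_a^{Y,n}-\P_b^{Y,n}\|_{\mathrm{TV}}\le C\varepsilon$. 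So your plan goes through once this substitution is made.

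The paper's proof is qualitative and sidesteps all of this. It invokes Blackwell's decomposition \cite{BD1955} of a transient chain into atomic almost-closed sets and observes that any almost-closed set $J\subset\Z_+^d\setminus\sX$ for $(\phi(X_n))$ is automatically almost-closed for the free drifted walk $(Z_n)$: on the positive-probability event that $Y$ never hits $\sX$ the two processes coincide pathwise, so $\P(Z_n\in J\text{ eventually})>0$, and Liouville for $Z$ (Blackwell \cite[Theorem~3]{BD1955}) forces this probability to be $1$. Hence the Blackwell decomposition of $\phi(X)$ is that of $Z$, a single atom, with no quantitative estimate required. Your closing intuition --- that $\phi$ quotients out the only nontrivial boundary datum, the terminal orthant $\Theta\in\{\pm1\}^d$ --- is exactly the right picture, and Blackwell's framework is a short way to make it rigorous.
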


\begin{proof}
When $d = 2$, the lemma is a special case of the main result in \cite{KI-1999}. Our proof is essentially a reproduction of the argument of \cite{KI-1999}, formulated for all $d$.

Following \cite{BD1955}, a subset $C \subset \Z^d_+$ is said to be \emph{almost closed} with respect to $(\phi(X_n))_{n=0}^{\infty}$ if
$$
\P(\phi(X_n) \in C \text{ for all sufficiently large } n) = 1.
$$

\noindent A set $C$ is called \emph{atomic} if $C$ does not contain two disjoint almost closed subsets. By \cite{BD1955}, there exists a collection $\{C_1, \, C_2, \cdots \}$ of disjoint almost closed sets such that
\begin{eqnarray*}
&&{\bf (i)}\ \mbox{every $C_i$ except at most one is atomic},\\
&&{\bf (ii)}\ \mbox{the non-atomic $C_i$, if present, contains no atomic subsets,}\\
&&{\bf (iii)}\ \sum_i \P ( \lim_{n\to \infty} \{ \phi(X_n) \in C_i \} ) =1.
\end{eqnarray*}

\noindent Furthermore, $(\phi(X_n))_{n=0}^{\infty}$ has the Liouville property if and only if it is simple and atomic in the sense that the decomposition consists of a single atomic set $C_1$.

Let
$$
\sX  := \{ x = (x_1, \ldots, x_d) \in \Z^d_+ : \
  x_i = 0 \text{ for some } 1 \le i \le d \},
$$

\noindent be the boundary of $\Z^d_+$. Starting at $x \in \Z^d_+ \setminus \sX$, $(\phi(X_n))_{n=0}^{\infty}$ has the same
distribution as the drifted random walk $(Z_n)_{n=0}^{\infty}$ driven by $\mu$ specified in Theorem \ref{T:intersection}, before hitting the boundary $\sX$.

Define
$$
\tau^X
:=
\inf \{n :\ \phi(X_n) \in \sX \}
\ \mbox{and}\
\tau^Z = \inf \{n:\ Z_n \in \sX\}.
$$

\noindent Let $J \subset \Z^d_+ \setminus \sX$ be an almost closed set with respect to $(\phi(X_n))_{n=0}^{\infty}$, i.e.,
$$
\P(\phi(X_n) \in J \text{ for all sufficiently large } n ) = 1.
$$

\noindent Since $( \phi(X_n), \, n < \tau^X)$ is distributed as $(Z_n, \, n < \tau^Z$, and $\P(\tau^X = \infty)> 0$, we have
\begin{align}
 &\P (Z_n \in J \text{ for all sufficiently large } n )
    \nonumber
    \\
 &\ge \P (\phi(X_n)\in J \text{ for all sufficiently large }n,\ \tau^X = \infty ) > 0.
    \label{e:positive}
\end{align}

\noindent By \cite[Theorem 3]{BD1955}, $(Z_n)_{n=0}^{\infty}$ has the Liouville property, thus
$$
\P(Z_n \in J \text{ for all sufficiently large } n) \in\{0, \, 1\}.
$$

\noindent Combining this with \eqref{e:positive}, we see that $J$ is
also almost closed with respect to $(Z_n)_{n=0}^{\infty}$.

Thus, the decomposition for $(\phi(X_n))_{n=0}^{\infty}$ is automatically the unique decomposition for $(Z_n)_{n=0}^{\infty}$. Since $(Z_n)_{n=0}^{\infty}$ is simple and atomic, so is $(\phi(X_n))_{n=0}^{\infty}$, which is equivalent to the aforementioned
Liouville property.
\end{proof}

\begin{lem}
  \label{L:intersect2}
 Let $(X_n)_{n=0}^{\infty}$ and $(Y_n)_{n=0}^{\infty}$ be independent $\RW_\lambda$'s on $\Z^d$ with $\lambda\in (0, \, 1)$. Then almost
 surely the number of intersections of $(\phi(X_n))_{n=0}^{\infty}$
 and $(\phi(Y_n))_{n=0}^{\infty}$ is infinite if $d\le 3$ and is finite if $d\ge 4$.

\end{lem}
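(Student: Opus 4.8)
The plan is to prove the dichotomy by combining a ``positive probability'' input coming from Theorem~\ref{T:intersection} with a $0$--$1$ law coming from the Liouville property of Lemma~\ref{L:poisson}. Throughout I write $\sR^Y := \{\phi(Y_n):\ n\ge 0\} \subset \Z_+^d$ for the range of the reflected walk $(\phi(Y_n))$, and I record at the outset that $(\phi(X_n))$ and $(\phi(Y_n))$ are transient: by Theorem~\ref{T:speed} the underlying walks have positive speed, so $|X_n|\to\infty$ and every fixed site is visited only finitely often almost surely. In particular the number of distinct common sites of $(\phi(X_n))$ and $(\phi(Y_n))$ is infinite if and only if $\phi(X_n)\in\sR^Y$ for infinitely many $n$, so I may work with the latter event throughout.

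First I would establish that the relevant event has positive probability. Start both walks deep in the interior of the first orthant, say at $(M,\dots,M)$ with $M$ large. Before $(X_n)$ hits an axial hyperplane it coincides in law with the drifted walk of \eqref{e:mu}, and each coordinate is then a lazy biased walk with ratio $\lambda$, started at $M$, so it avoids $0$ forever with probability $1-\lambda^M$; a union bound gives $\P(E_X)\ge 1-d\lambda^M>0$ for the event $E_X$ that $(X_n)$ never leaves the open first orthant, and likewise for $E_Y$, whence $\P(E_X\cap E_Y)>0$ by independence. On $E_X\cap E_Y$ the reflected walks coincide with genuine drifted walks $Z,W$ driven by $\mu$; more precisely the joint law of $(\phi(X),\phi(Y))$ restricted to $E_X\cap E_Y$ equals that of $(Z,W)$ restricted to the corresponding confinement event. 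Since by Theorem~\ref{T:intersection} the walks $Z,W$ meet at infinitely many sites almost surely when $d\le 3$ and at finitely many sites almost surely when $d\ge 4$, the same holds almost surely on the positive-probability confinement event. Hence for $d\le 3$ the event that $(\phi(X_n))$ and $(\phi(Y_n))$ meet infinitely often has positive probability, while for $d\ge 4$ its complement does.

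The heart of the matter is the $0$--$1$ law, obtained by applying Lemma~\ref{L:poisson} twice. Condition on the entire trajectory of $(\phi(Y_n))$, so that $\sR^Y$ becomes a fixed countable subset of $\Z_+^d$, and put $h_Y(x):=\P_x\big(\phi(X_n)\in\sR^Y\ \text{for infinitely many}\ n\big)$. Deleting finitely many initial steps of $(X_n)$ leaves this event unchanged, so $h_Y$ is a bounded harmonic function for $(\phi(X_n))$ and is therefore constant, $h_Y\equiv c_Y$, by Lemma~\ref{L:poisson}; conditioning on the first $n$ steps of $(\phi(X_n))$ produces a bounded martingale equal to $c_Y$ that converges almost surely to the indicator of the event, whence $c_Y\in\{0,1\}$. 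Consequently $\P(\text{infinitely many intersections})=\E_Y[c_Y]=\P(c_Y=1)$. It then remains to see that $\{c_Y=1\}$ is an invariant event for the chain $(\phi(Y_n))$: passing from $Y$ to its time shift $\theta Y$ alters $\sR^Y$ by at most the single site $\phi(Y_0)$, which the transient walk $(\phi(X_n))$ hits only finitely often, so $c_{\theta Y}=c_Y$ almost surely. Since triviality of the bounded harmonic functions of $(\phi(Y_n))$ (Lemma~\ref{L:poisson}) is equivalent to triviality of its invariant $\sigma$-field, this forces $\P(c_Y=1)\in\{0,1\}$.

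Combining the two inputs finishes the proof: for $d\le 3$ the positive-probability event must have probability $1$, giving infinitely many intersections almost surely, and for $d\ge 4$ the event of infinitely many intersections has probability strictly below $1$, hence equal to $0$, giving finitely many intersections almost surely. I expect the main obstacle to be the correct formulation of this two-step $0$--$1$ law — in particular verifying that, after conditioning, ``$\phi(X)$ hits the random set $\sR^Y$ infinitely often'' is an invariant event for $(\phi(X_n))$ and that $\{c_Y=1\}$ is an invariant event for $(\phi(Y_n))$ — where transience (positive speed) is exactly what renders the one-site discrepancy in the range harmless. The positive-probability step is comparatively routine once Theorem~\ref{T:intersection} and the confinement estimate are in hand.
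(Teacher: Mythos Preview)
Your proof is correct and follows the same strategy as the paper: a coupling with drifted walks on the confinement event gives the positive-probability input from Theorem~\ref{T:intersection}, and the Liouville property of Lemma~\ref{L:poisson} yields the $0$--$1$ law. The only difference is that where the paper cites \cite{BI-CN-GA2012} for the $0$--$1$ law, you spell out the two-step argument (condition on $Y$ and apply Liouville to $\phi(X)$, then apply Liouville/invariant-$\sigma$-field triviality to $\phi(Y)$), making your version self-contained.
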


\begin{proof}
By Lemma \ref{L:poisson} and the proof of \cite[Theorem
1.1]{BI-CN-GA2012}, the probability that $(\phi(X_n))_{n=0}^{\infty}$ and $(\phi(Y_n))_{n=0}^{\infty}$ intersect infinitely often is either $0$ or $1$.

Before hitting any axial hyperplanes, $(\phi(X_n))_{n=0}^{\infty}$ and $(\phi(Y_n))_{n=0}^{\infty}$ has the same joint distribution as that of $(Z_n)_{n=0}^{\infty}$, $(W_n)_{n=0}^{\infty}$, where
$(Z_n)_{n=0}^{\infty}$ and $(W_n)_{n=0}^{\infty}$ are independent drifted random walks on $\Z^d$ with step distribution $\mu$ described  in Theorem \ref{T:intersection}, and $Z_0=\phi(X_0)$, $W_0=\phi(Y_0)$.
Let $T$ be the first time either $(Z_n)_{n=0}^{\infty}$ or $(W_n)_{n=0}^{\infty}$ hits an hyperplane. By Theorem \ref{T:intersection}, on the event $\{T =\infty\}$, which has positive
probability, the number of intersections between $(Z_n)_{n=0}^{\infty}$ and $(W_n)_{n=0}^{\infty}$ is infinite if $d\le 3$, and is finite if $d\ge 4$. In view of the aforementioned $0$--$1$ law above prove this lemma.
\end{proof}

\section{Uniform spanning forests associated with $\RW_\lambda$}
\label{s:usf}

Let $G=(V(G), E(G))$ be a locally finite, connected infinite graph, rooted at $o$. To each edge $e=(x,\, y) \in E(G)$, we assign a weight or conductance $c(e) = c(x,y) = c(y,\, x)$. The weighted graph $(G,\, c)$ is called an electrical network. Consider a Markov chain on $G$ with transition probability $p(x,\, y) = \frac{c(x,\, y)}{\sum_{z \sim x} c(x,\, z)}$, where $z \sim x$ means that $z$ and $x$ are adjacent vertices in $G$. The chain is referred to as a random walk on $G$ with conductance $c$.
%Every reversible Markov chain on $G$ with respect to a measure $\nu$ can be realized as a random walk on $G$ with conductance $c$ given by $c(x,\, y) = \nu(x) p(x,\, y)$.
Biased random walk $\RW_\lambda$ on $G$ is a random walk on $G$ with conductance defined by $c(e) = c_\lambda(e) := \lambda^{-|e|}$.

For any finite network $(G,\, c)$, we consider associated spanning trees, i.e., subgraphs that are trees and that include every vertex. We define the uniform spanning tree measure $\UST_G$ to be the probability measure on spanning trees of $G$ such that the measure of each tree is proportional to the product of conductances of the edges in the tree.

An exhaustion of an infinite graph $G$ is a sequence $\{ V_n \}_{n \geq 1}$ of finite, connected subsets of $V(G)$ such that $V_n \subset V_{n+1}$ for all $n\ge 1$ and $\cup_n V_n = V(G)$. Given such an exhaustion, we define the network $G_n$ to be the subgraph of $G$ induced by $V_n$ together with the conductances inherited from $G$. The free uniform spanning forest measure $\FSF$ is defined to be the
weak limit of the sequence $\{ \UST_{G_n} \}_{n \ge 1}$ in the sense that
$$
\FSF ( S \subset \F )
=
\lim_{n\to \infty} \UST_{G_n} (S \subset T) ,
$$

\noindent for each finite set $S \subset E(G)$. For each $n$, we can also construct a network $G_n^*$ from $G$ by gluing (= wiring) every vertex of $G \setminus G_n$ into a single vertex, denoted by $\partial_n$, and deleting all the self-loops that are created. The set of edges of $G_n^*$ is identified with the set of edges of $G$ having at least one endpoint in $V_n$. The wired uniform spanning forest measure $\WSF_G$ is defined to be the weak limit of the sequence $\{\USF_{G_n^*}\}_{n\ge 1}$ so that
$$
\WSF_G (S \subset \F)
=
\lim_{n\to\infty} \UST_{G_n^*} (S \subset T),
$$

\noindent for each finite set $S \subset E(G)$. [For the existence of both $\FSF$ and $\WSF$, see \cite[Chapter 10]{LR-PY2016}.] Both measures $\FSF$ and $\WSF$ are easily seen to be concentrated on the set of uniform spanning forests of $G$ with the property that every
connected component is infinite. It is also easy to see that $\FSF$ stochastically dominates $\WSF$ for any infinite network $G$.

The number of trees in the wired uniform spanning forest is a.s.\ a constant; see \eqref{e:treenum} below. In \cite{BI-LR-PY2001}, it is asked whether $\FSF$ and $\WSF$ are mutually singular (also formulated in \cite[Question 10.59]{LR-PY2016}) and whether the number of trees in the free uniform spanning forest is a.s.\ constant (also formulated in \cite[Question 10.28]{LR-PY2016}) if $\FSF \not= \WSF$.\footnote{For a group acting on a network so that every vertex has an infinite orbit, it is known (\cite[Corollary 10.19]{LR-PY2016}) that the action is mixing and ergodic for both $\FSF$ and $\WSF$ (so the number of trees in the uniform spanning forest is a.s.\ a constant), and if $\FSF$ and $\WSF$ are distinct, they are mutually singular. It is unknown whether this remains true without the assumption that each vertex has an infinite orbit.}
%The uniform spanning forest has a constant number of trees in the aforementioned ergodic setting. Clearly the electric networks for $\RW_\lambda$ ($\lambda\not=1$) is a nice choice to check these assertions and other problems on $\FSF$s and ${\rm WSF}$s.
To answer these questions, the first step is to know whether $\FSF$ and $\WSF$ are identical. %If they are, it is worth determining the number of trees in the uniform spanning forest.
When the electric network is not transitive and $\FSF \not= \WSF$, it seems interesting to study whether $\FSF$ and $\WSF$ are singular. A simple situation is when $G$ is a tree, in which case the free uniform spanning forest has one tree (which is the singleton $\{ G\}$), whereas the number of trees in the wired uniform spanning forest can be higher if the constant $K$ defined in \eqref{e:treenum} below is at least 2.
%singularity problem and the constant tree-number one. We observe that for trees, the singularity problem has a positive answer: In this case, there is exactly one component $\FSF$-a.s., while the number of components is infinite $\WSF$-a.s.\ on a connected network when the probability is $0$ that two independent random walks from some (or every) vertex intersect infinitely often.

Let $\lambda > 0$. Let $c_\lambda(\cdot)$ be the conductances associated with $\RW_\lambda$ on graph $G$, and $r_\lambda(\cdot) :=\frac{1}{c_\lambda(\cdot)}$ being the corresponding resistance. Write $\FSF_\lambda$ and $\WSF_\lambda$ for the free and wired uniform spanning forest measures. [When they are identical, we use the notation $\USF_\lambda$ instead.]

We give a criterion to determine whether $\FSF_\lambda = \WSF_\lambda$, compute the number of trees in $\USF_\lambda$ on $\Z^d$, and consider the singularity problem when $\FSF_\lambda \not= \WSF_\lambda$.
%We first determine whether ${\rm FSF}={\rm WSF}$ or not. Then when ${\rm FSF}={\rm WSF},$ we calculate the number of trees of ${\rm FSF}={\rm WSF}.$
%And when ${\rm FSF}\not={\rm WSF},$ we consider the singularity problem.

\subsection{$\USF_\lambda$ on $\Z^d$}
%To begin, we prove firstly Theorem \ref{thm6.1}. Notice for any graph $G,$ when $\lambda>\lambda_c(G),$ ${\rm FSF}_\lambda={\rm WSF}_\lambda$ due to recurrence. And on any transitive amenable network, ${\rm FSF}={\rm WSF}$ (\cite[Corollary 10.9]{LR-PY2016}). When $0<\lambda<\lambda_c(G),$ we have the following

On any graph $G$, if $\lambda > \lambda_c(G)$, then $\RW_\lambda$ is recurrent, so $\FSF_\lambda = \WSF_\lambda$. The following theorem deals with the case $0< \lambda < \lambda_c(G)$. Recall (\cite[Section 6.5]{LR-PY2016}) that a graph is said to have one end if the deletion of any finite set of vertices leaves exactly one infinite component.

\begin{thm}
\label{thm6.1}

Let $G$ be a graph with one end such that
\begin{equation}
    \label{(6.1)}
    \lim_{n\to\infty} \Big( \sum_{x\in\partial B_G(n)} (d_x^{+}+d_x^0)\Big)^{1/n}
    =1.
\end{equation}

\noindent Then for $0<\lambda<\lambda_c(G)=1$ %Currents are unique\footnote{For a formal definition, see (\cite[Section 9.3]{LR-PY2016}).} on the network $(G, \, c_\lambda)$ associated with $\RW_\lambda$, and
we have $\FSF_\lambda = \WSF_\lambda$. In particular, for any $d\ge 2$ and any Cayley graph of additive group $\mathbb{Z}^d$, $\FSF_\lambda = \WSF_\lambda$ for $\lambda\in (0,\, 1)$.

\end{thm}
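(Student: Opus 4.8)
The plan is to reduce the identity $\FSF_\lambda=\WSF_\lambda$ to the vanishing of harmonic Dirichlet functions, and then to play the exponential conductances $c_\lambda(e)=\lambda^{-|e|}$ against the subexponential boundary growth \eqref{(6.1)}. First I would invoke the criterion (\cite{BI-LR-PY2001}; see also \cite[Chapter~10]{LR-PY2016}) that on any network $\FSF=\WSF$ if and only if every harmonic Dirichlet function is constant, where a harmonic Dirichlet function is a map $h\colon V(G)\to\R$ that is harmonic for the conductances $c_\lambda$ and has finite energy $\mathcal{E}(h):=\sum_{e}c_\lambda(e)(\nabla h(e))^2<\infty$, with $\nabla h(e):=h(y)-h(x)$ for $e=(x,y)$. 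So it suffices to show that any such $h$ is constant. Writing $E_k$ for the edges $e$ with $|e|=k$ and $b_k:=\sum_{e\in E_k}(\nabla h(e))^2$, the key bookkeeping identity is $\mathcal{E}(h)=\sum_{k\ge0}\lambda^{-k}b_k$, so $\sum_{k\ge0}\lambda^{-k}b_k<\infty$; in particular $b_k=o(\lambda^k)$, i.e.\ the shell gradient energies decay exponentially.

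From this single summability I would extract two facts. By Cauchy--Schwarz, $\sum_{k}\sqrt{b_k}=\sum_k\lambda^{k/2}\sqrt{\lambda^{-k}b_k}\le(\sum_k\lambda^k)^{1/2}(\sum_k\lambda^{-k}b_k)^{1/2}<\infty$ (here $\lambda<1$ is used), and summing $|\nabla h|$ along a geodesic from $o$, which crosses one edge per level, gives $\sup_x|h(x)-h(o)|\le\sum_k\sqrt{b_k}<\infty$, so $h$ is bounded. Next I would control the oscillation of $h$ on the sphere $\partial B_G(m)$: joining two vertices of $\partial B_G(m)$ by a path of length at most $a_m:=\sum_{x\in\partial B_G(m)}(d_x^{+}+d_x^0)$ using only edges of level $m$, Cauchy--Schwarz yields $\mathrm{osc}_{\partial B_G(m)}(h)\le(a_m b_m)^{1/2}$. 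Since \eqref{(6.1)} forces $a_m^{1/m}\to1$ while $b_m=o(\lambda^m)$, one gets $a_m\lambda^m\to0$ and hence $a_m b_m=o(a_m\lambda^m)\to0$, so the oscillation over each sphere tends to $0$. A parallel estimate on the edges joining consecutive spheres shows the sphere averages $\bar h_m$ form a Cauchy sequence, converging to some $\ell\in\R$; together with the vanishing oscillation this gives $h(x)\to\ell$ as $|x|\to\infty$.

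Finally, a harmonic function with $h(x)\to\ell$ at infinity must be constant by the maximum principle along the exhaustion $B_G(n)$ (if $h-\ell$ had a positive value it would attain a positive maximum at a finite vertex, forcing $h-\ell$ constant on the connected graph, contradicting $h\to\ell$), so $h\equiv\ell$ and no nonconstant harmonic Dirichlet function exists. For the corollary I would check that every Cayley graph of $\Z^d$ with $d\ge2$ has one end and that $a_n$ grows only polynomially, whence \eqref{(6.1)} holds and $\lambda_c=1$, so the theorem applies for all $\lambda\in(0,1)$. The step I expect to be the main obstacle is the sphere-oscillation bound: a priori the induced subgraph on $\partial B_G(m)$ need not be connected, so one must use the one-end hypothesis to route the connecting path through the unique infinite component while keeping its length subexponential and its edges at levels close to $m$ (for $\Z^d$ the $\ell^1$-spheres are genuinely connected, which is why the argument is cleanest there). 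It is precisely in balancing this path length against the exponential decay of $b_m$ that the interplay of the one-end assumption and \eqref{(6.1)} is indispensable.
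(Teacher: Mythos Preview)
Your reduction to the harmonic--Dirichlet criterion and the overall strategy are the same as the paper's, but the sphere-oscillation step has a genuine gap. You assert that two points of $\partial B_G(m)$ can be joined by a path of length at most $a_m$ using only edges of level $m$; this is false in general, and the one-end hypothesis does not supply it. One end only tells you that $G\setminus B_G(m-1)$ is connected, so the connecting path may visit arbitrarily high levels and have no a~priori length bound. You flag this yourself, but the proposed fix (``keep the edges at levels close to $m$'') is not something the hypotheses give, and without it your Cauchy--Schwarz bound $(a_m b_m)^{1/2}$ does not follow.

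The paper avoids the whole issue by trading your $\ell^2$ path estimate for an $\ell^1$ bound over \emph{all} edges. From $\mathcal{E}(h)<\infty$ one has the pointwise bound $|\nabla h(e)|^2\,c_\lambda(e)\le \mathcal{E}(h)$, i.e.\ $|\nabla h(e)|\le C\,\lambda^{|e|/2}$ for every edge. Summing and invoking \eqref{(6.1)} gives
\[
\sum_{e}|\nabla h(e)|\;\le\; C\sum_{k\ge 0}\lambda^{k/2}\,|E_k|\;<\;\infty .
\]
Now apply the maximum principle on $B_G(n+1)$: the max and min of $h$ there sit on $\partial B_G(n+1)$; the one-end hypothesis yields \emph{some} finite path between them in $G\setminus B_G(n)$, and regardless of its length or the levels it traverses,
\[
\max_{B_G(n+1)}h-\min_{B_G(n+1)}h\;\le\;\sum_{|e|\ge n+1}|\nabla h(e)|\;\longrightarrow\;0.
\]
Since this oscillation is nondecreasing in $n$, it must vanish identically, so $h$ is constant. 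This replaces your boundedness step, the sphere-average Cauchy argument, and the final limit-plus-maximum-principle step in one stroke.
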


The proof of Theorem \ref{thm6.1} shows that for any graph $G$ with one end and such that
$$
\mathrm{gr}_{*}(G)
:=
\limsup_{n\to\infty} \Big( \sum_{x\in\partial B_G(n)} (d_x^{+}+d_x^0) \Big)^{1/n}
\in
[1, \, \infty),
$$

\noindent %currents are unique on the network $(G,\, c_\lambda)$ and
we have $\FSF_\lambda = \WSF_\lambda$ for any $0<\lambda<\frac{1}{\mathrm{gr}_{*}(G)^2}$. % This naturally leads to the following question.

% \begin{prob}

% Let $G$ be a graph with one end such that $\mathrm{gr}_*(G)<\infty$. Are there constants
% $$
% 0<\lambda_1\le \lambda_2\le \lambda_c(G)<\infty,
% $$

% \noindent such that $\FSF_\lambda = \WSF_\lambda$ for $\lambda\in (0, \, \lambda_1) \cup (\lambda_2, \, \infty)$ and $\FSF_\lambda \not= \WSF_\lambda$ for $\lambda\in (\lambda_1, \, \lambda_2)$? Furthermore, if $\lambda_1=\lambda_2$, does it imply that $\FSF_{\lambda_1} = \WSF_{\lambda_1}$?

% \end{prob}

\begin{proof}[{\bf Proof of Theorem \ref{thm6.1}}]
For any function $f: \, V \to \R$, let ${\rm d}f$ be the antisymmetric function on oriented edges defined by
$$
{\rm d} f(e)
:=
f(e^-) - f(e^+),
$$

\noindent where $e^-$ and $e^+$ are respectively the tail and head of
$e$. Define the space of Dirichlet functions as
$$
{\mathbf D}_\lambda
:=
\Big\{ f: \ ({\rm d}f,{\rm d}f)_{c_\lambda} := \sum_{e\in \mathbf{E}} |{\rm d}f(e) |^2c_\lambda(e)<\infty \Big\} ,
$$

\noindent where $\mathbf{E}$ is the set of all oriented edges of $G$. %$c_\lambda$ is regarded naturally as a function on $\mathbf{E}.$
%Denote by $\mathbf{HD}_\lambda$ the set of functions $f \in \mathbf{D}_\lambda$ that are harmonic.
By
% \cite[Theorem 9.5 \& Proposition 10.14]{LR-PY2016},
\cite[Theorem 7.3]{BI-LR-PY2001}, 
\begin{align*}
    \FSF_\lambda = \WSF_\lambda\
 &\; \Longleftrightarrow \;
%\mathbf{HD}_\lambda = \mathbb{R}
    \mbox{all harmonic functions in $\mathbf{D}_\lambda$ are constant}.
 %    \\
 % &\; \Longleftrightarrow \;
 %    \mbox{currents are unique for}\ (G,\, c_{\lambda}).
\end{align*}

Clearly $\lambda_c(G)=1$. Let $\lambda\in (0,\, 1)$. Let $f$ be a harmonic function in $\mathbf{D}_\lambda$. We need to prove that $f$ is a constant.

By the maximum principle, for every $n\ge 1$, there are $v_1(n)$, $v_2(n)\in \partial B_G(n)$ such that $f$ takes its maximum
at $v_1(n)$ and minimum at $v_2(n)$ over all vertices in $B_G(n)$. By the assumption,
$$
({\rm d}f, \, {\rm d}f)_{c_\lambda}
=
\sum_{e\in\mathbf{E}} |{\rm d}f(e)|^2 \, \lambda^{-|e|}
<
\infty,
$$

\noindent where $|e|$ is the distance for $e$ from $o$. Hence for some constant $C>0$, $\sup_{e\in\mathbf{E}} |{\rm d}f(e)|^2 \lambda^{-|e|} \le C$, i.e.,
$$
|{\rm d}f(e)|
\le
C^{1/2}\lambda^{|e|/2},
\qquad
\forall e\in\mathbf{E}.
$$

\noindent Combined with (\ref{(6.1)}), we see that
\begin{equation}
    \label{(6.2)}
    \sum_{e\in\mathbf{E}} |{\rm d}f(e)|
    \le
    C^{1/2} \sum_{n=0}^\infty \lambda^{n/2}\sum_{e\in\mathbf{E}, \; |e|=n} 1
    <
    \infty.
\end{equation}

Let $n\ge 1$. Since $G$ has one end, $G\setminus B_G(n)$ is a connected graph, so there is a finite path $u_0^nu_1^n\cdots u_{k_n}^n$ in $G\setminus B_G(n)$ such that $u_0^n=v_1(n+1)$, $u_{k_n}^n=v_2(n+1)$. As such,
$$
0
\le
f(v_1(n+1))-f(v_2(n+1))
=
\sum_{j=1}^{k_n} [ f(u_{j-1}^n)-f(u_j^n) ]
\le
\sum_{e\in\mathbf{E}, \, |e| \ge n+1} |{\rm d}f(e)|.
$$

\noindent By (\ref{(6.2)}),
$$
\lim_{n\to\infty} \{f(v_1(n+1))-f(v_2(n+1))\}=0,
$$

\noindent which implies that $f$ is constant.
\end{proof}

Let us consider the uniform spanning forests associated with
$\RW_{\lambda}$ on $\Z^d$. Theorem \ref{thm6.1} says that
$\FSF_{\lambda} = \WSF_{\lambda}$ on $\Z^d$ ($d\ge 2$) for $\lambda \in (0, \, 1)$. For $\lambda = 1$, the two measures are also known to be identical (%\cite[Corollary 10.9]{LR-PY2016}
\cite{PR1991}). In these cases, we denote both of them by $\USF_{\lambda}$.

When $\lambda=1$, the uniform spanning forest on $\Z^d$ has one tree a.s.\ for $d \le 4$ and has infinitely many trees a.s.\ for $d \ge 5$; see \cite{PR1991} or \cite[Theorem 10.30]{LR-PY2016}. When $0<\lambda<1$, Theorem \ref{thm6.2} below reveals the existence of a novel phase transition, with the critical dimension reduced to $3$. %This is because that the critical dimension for intersection number of two independent drifted random walks is $3$; see Theorem~\ref{T:intersection}. More precisely, the following holds.

\begin{thm}\label{thm6.2}

Let $0 < \lambda < 1$.

{\bf (i)} Almost surely, the number of trees in the uniform spanning forest associated with $\RW_{\lambda}$ on $\Z^d$ is $2^d$ if $d = 2$ or $3$, 
% $2\le d \le 3$
and is infinite if $d \ge 4$. Moreover, when $d\ge 2$, $\USF_\lambda$-a.s.\ every tree has one end.

{\bf (ii)} On $\mathbb{Z}^1$, $\FSF_\lambda \not= \WSF_\lambda$: the free uniform spanning forest is the singleton of the tree $\mathbb{Z}^1$, whereas the wired uniform spanning forest has two trees and satisfies
\begin{equation}
    \WSF_{\lambda} [\mathfrak{F} = \{T_{i-1}^{-},\, T_i^{+}\} ]
    =
    \frac{1}{2}\, (1-\lambda)\lambda^{|i|\wedge |i-1|},
    \qquad i\in\mathbb{Z}.
    \label{(6.13)}
\end{equation}

\noindent Here, $\mathfrak{F}$ has the distribution $\WSF_\lambda$, $T^{-}_{i-1}$ and $T^+_i$ are subtrees of $\mathbb{Z}^1$ with vertex sets $\{i-1, \, i-2,\, \ldots\}$ and $\{i,\, i+1, \, \ldots\}$, respectively.

\end{thm}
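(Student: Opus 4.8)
The plan is to handle the two parts separately: part (i) is reduced to the intersection dichotomy already established, while part (ii) is a direct one-dimensional computation. For part (i), Theorem~\ref{thm6.1} already gives $\FSF_\lambda = \WSF_\lambda = \USF_\lambda$ on $\Z^d$ for every $d \ge 2$, so it suffices to count the trees of the wired measure. I would sample $\WSF_\lambda$ by Wilson's algorithm rooted at infinity, so that the tree through a vertex $x$ is the loop-erasure of $\RW_\lambda$ started at $x$, and then exploit the orthant decomposition of $\RW_\lambda$. By Lemma~\ref{P:axialplane} the walk visits $\sX$ only finitely often, hence a.s.\ localizes in one of the $2^d$ open orthants; by the sign symmetry of the transition probabilities in \eqref{(1.1)} each orthant is reached with positive probability from every starting point. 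The central step is a dichotomy for two independent walks $X$ (from $x$) and $Y$ (from $y$): since $X_n = Y_n$ forces the two to occupy a common orthant once both have left $\sX$ for good, unfolding via $\phi$ shows that $X$ and $Y$ intersect infinitely often if and only if they localize in the same orthant and $d \le 3$, the latter equivalence being precisely Lemma~\ref{L:intersect2}.

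I would then invoke the connection between infinitely-many intersections of the network random walk and connectivity in the wired forest, in the spirit of \cite[Theorem~9.4]{BI-LR-PY2001}: when two walks meet infinitely often their loop-erasures a.s.\ merge, while if they meet only finitely often there is positive probability that they do not. Combined with the dichotomy above, this gives for $d = 2, 3$ that two vertices lie in the same tree exactly when their trees escape into the same orthant, so the trees are indexed by the $2^d$ orthant-directions; since each orthant is reached with positive probability from every vertex and there are infinitely many vertices, all $2^d$ directions are realized a.s., yielding exactly $2^d$ trees. For $d \ge 4$, even two walks localizing in a common orthant meet only finitely often, so each orthant splits into infinitely many trees, and there are infinitely many trees overall, in analogy with the $d \ge 5$ case for simple random walk. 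The one-end property of each tree I would deduce from the positive speed of Theorem~\ref{T:main1} together with the standard one-endedness criterion for wired forests (cf.\ \cite{LR-PY2016}).

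For part (ii) with $d = 1$, the graph $\Z^1$ is itself a tree, so its free uniform spanning forest is the degenerate forest $\{\Z^1\}$ consisting of a single tree. For the wired measure I would use the edge-marginal identity $\WSF_\lambda[e \notin \mathfrak{F}] = r_\lambda(e)/R_{\mathrm{tot}}$, where $R_{\mathrm{tot}} = \sum_j r_\lambda(e_j) = \sum_{n \ge 0} 2\lambda^n = \tfrac{2}{1-\lambda}$ is the total resistance. Writing $e_i$ for the edge between $i-1$ and $i$, so that $r_\lambda(e_i) = \lambda^{|i| \wedge |i-1|}$, the two endpoints of $e_i$ are joined directly (resistance $r_\lambda(e_i)$) and through the wired point at infinity by the two rays in series (resistance $R_{\mathrm{tot}} - r_\lambda(e_i)$), whence $\WSF_\lambda[e_i \notin \mathfrak{F}] = r_\lambda(e_i)/R_{\mathrm{tot}} = \tfrac{1}{2}(1-\lambda)\lambda^{|i| \wedge |i-1|}$. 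Because every component must be infinite and deleting two or more edges of $\Z^1$ leaves a finite middle piece, at most one edge is deleted; since these probabilities sum to $1$, the expected number of deletions is $1$, forcing exactly one deletion a.s. This produces the two rays $T_{i-1}^-$ and $T_i^+$ with the stated law \eqref{(6.13)}, and in particular shows $\FSF_\lambda \ne \WSF_\lambda$.

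The main obstacle I anticipate lies in part (i), in the passage from the intersection dichotomy to the exact tree count: one must upgrade statements of "positive probability of (non)merging" to almost-sure ones, which requires the a.s.\ constancy of the number of wired trees together with a careful treatment of the conditioning on orthant localization (Lemma~\ref{L:intersect2} must be applied after conditioning, using that the law of $\phi(X)$ is insensitive to which orthant $X$ selects). The genuinely delicate analytic point is verifying that the loop-erasures actually merge whenever the underlying walks meet infinitely often, rather than merely intersecting the unerased traces, and this is exactly where the connection to \cite{BI-LR-PY2001} carries the argument.
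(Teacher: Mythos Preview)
Your overall strategy for part~(i) --- orthant localization via Lemma~\ref{P:axialplane}, the intersection dichotomy of Lemma~\ref{L:intersect2}, and the appeal to \cite[Theorem~9.4]{BI-LR-PY2001} --- is exactly the paper's, but you have made the tree-count step harder than it needs to be. The paper does not go through Wilson's algorithm or argue that loop-erasures merge; it simply applies the formula
\[
K \;=\; \sup\bigl\{k:\ \exists\, w_1,\dots,w_k,\ \alpha(w_1,\dots,w_k)>0\bigr\}
\]
from \cite[Theorem~9.4]{BI-LR-PY2001} directly. For $d\in\{2,3\}$: exhibit $2^d$ starting points (one deep in each open orthant) so that with positive probability the $2^d$ walks never leave their own orthants --- hence $K\ge 2^d$; conversely, by pigeonhole any $2^d+1$ walks must place two in the same orthant eventually, and those two intersect infinitely often by Lemma~\ref{L:intersect2}, so $\alpha=0$ and $K\le 2^d$. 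For $d\ge4$, Lemma~\ref{L:intersect2} gives finitely many intersections for any pair, hence $K=\infty$. This sidesteps entirely the ``upgrade from positive probability to a.s.'' and the ``do loop-erasures actually merge'' issues you flag in your last paragraph: the formula already outputs an a.s.\ constant.

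Your one-end argument does have a gap. Positive speed is not the input the paper uses, and there is no ``standard'' criterion deducing one-endedness from speed alone. The paper instead feeds the spectral radius bound $\rho_\lambda=\tfrac{2\sqrt\lambda}{1+\lambda}<1$ from Theorem~\ref{T:main1} into \cite[Theorem~6.7]{LR-PY2016} to get a strong isoperimetric inequality $\psi(\Z^d,t)\ge(1-\rho_\lambda)t$, and then applies the one-end criterion \cite[Theorem~10.43]{LR-PY2016} (which also needs $\inf_x \pi(x)>0$, checked directly). You should replace the speed input by the spectral-radius input.

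Your part~(ii) is correct and equivalent to the paper's: the paper carries out the same computation on the finite wired cycle $G_n^*=[-n,n]^*$ and passes to the limit, arriving at the same probabilities $r_\lambda(e_i)/R_{\mathrm{tot}}$ that your Kirchhoff argument produces directly.
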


\begin{proof}%[Proof of Theorem~\ref{thm6.2}]
%\begin{lem}
% For $0 < \lambda < 1$, almost surely the number of trees in $\USF_{\lambda}$ on
% $\Z^d$ is exactly $2^d$ for $2\leq d \leq 3$ and $\infty$ for $d \geq 4.$ Moreover, when $d\geq 2$ and $0<\lambda<1,$ on $\mathbb{Z}^d,$
% $\USF_\lambda$$-a.s.$ every tree has only one end.
%\end{lem}

%\begin{proof}
(i) The proof relies on the following general result (\cite[Theorem~9.4]{BI-LR-PY2001}): %\cite[Theorem 10.26]{LR-PY2016}:
Let $G$ be a connected network, and let $\alpha(w_1, \ldots, w_k)$
denote the probability that $k$ independent RW's on the network started at $w_1$, $\ldots$, $w_k$ have no pairwise intersections. Then the number of trees in the wired uniform spanning forest is a.s.\
\begin{equation}
    \label{e:treenum}
    K=
    \sup\{ k:\ \exists w_1, \ldots, w_k,\ \alpha(w_1,\cdots,w_k)>0 \}.
\end{equation}

We first study the number of trees in the uniform spanning forest.

The case $d \ge 4$ is easy: According to Theorem \ref{T:intersection}, two independent $\RW_\lambda$'s on $\Z^d$ intersect finitely often a.s., so by
%\cite[Theorem 10.26]{LR-PY2016},
\eqref{e:treenum}, the number of trees in the uniform spanning forest associated with $\RW_{\lambda}$ on $\Z^d$ is a.s.\ infinite.

Consider now the case $d = 2$ or $3$. Let $(X_n^{(j)})_{n=0}^{\infty}$, $1 \le j \le 2^d$, be independent $\RW_\lambda$'s on $\Z^d$ starting at $o$. Note that the lower limit
$$
\liminf_{n \to \infty} \alpha (X^{(1)}_n, \ldots, X^{(2^d)}_n)
$$

\noindent is a.s.\ greater than or equal to the probability that $(X_n^{(j)})_{n=0}^{\infty}$, $1 \le j \le 2^d$, eventually direct into different orthants. The latter probability is strictly positive according to Theorem \ref{T:hkZd}(ii). Consequently, there exist $\varepsilon_0 >0$ and $n_0 \in \mathbb{N}$ such that
$$
\P \{ \alpha (X^{(1)}_{n_0}, \ldots, X^{(2^d)}_{n_0}) > \varepsilon_0 \} >0.
$$

\noindent A fortiori, there are $v_1$, $\ldots$, $v_{2^d}$ such that $\alpha(v_1, \ldots, v_{2^d}) > \varepsilon_0$. By \eqref{e:treenum}, there are at least $2^d$ trees in the uniform spanning forest associated with $\RW_{\lambda}$ on $\Z^d$.

To prove that the number of trees is at most $2^d$, let us consider $2^d + 1$ independent $\RW_\lambda$'s on $\Z^d$ starting at any initial points. Since there are $2^d$ orthants in $\Z^d$, Lemma \ref{P:axialplane} implies that a.s.\ there are at least two of them eventually directing into a common orthant. By Lemma \ref{L:intersect2}, these two $\RW_\lambda$'s intersect i.o. with probability $1$. Therefore,
 \[\sup\{k:\ \exists w_1,\cdots,w_k,\ \alpha(w_1,\cdots,w_k)>0\} \leq
   2^d. \]
 Therefore the number of trees in the uniform spanning forest is exactly $2^d$ by
 %\cite[Theorem 10.26]{LR-PY2016}.
\eqref{e:treenum}.

Now fix $d\geq 2$ and $\lambda\in (0, \, 1)$. Write
\begin{align*}
    |F|_{c_\lambda}
 &= \sum\limits_{e\in F}c_\lambda(e),\ F\subset E\left(\mathbb{Z}^d\right),
    \\
    |K|_\pi
  &= \sum\limits_{x\in K}%\left(d_x^++d_x^-\lambda\right)\lambda^{-\vert x\vert},
    \pi(x), \ K\subset\mathbb{Z}^d,
    \\
    \psi (\mathbb{Z}^d,\, t)
 &= \inf\left\{ |\partial_E K|_{c_\lambda}:\ t\leq\vert K\vert_\pi<\infty\right\},\ t>0;
\end{align*}

\noindent where $\partial_EK=\left\{\{x,y\}\in E\left(\mathbb{Z}^d\right):\ x\in K,y\notin K\right\}$, and $\pi(x) := \left(d_x^++d_x^-\lambda\right)\lambda^{-\vert x\vert}$, $x\in \mathbb{Z}^d$, is an invariant measure of the walk. % (though formally we do not use the value of $\pi$).
Recall from Theorem~\ref{T:main1} that $\rho_\lambda=\frac{2\sqrt{\lambda}}{1+\lambda}<1$. By \cite[Theorem 6.7]{LR-PY2016},
$$\inf\left\{\frac{\vert\partial _EK\vert_{c_\lambda}}{\vert K\vert_\pi};\ \emptyset\not= K\subseteq\mathbb{Z}^d\ \mbox{is finite}\right\}\geq
  1-\rho_\lambda>0.$$
Thus for any $t>0,$ $\psi\left(\mathbb{Z}^d,t\right)\geq (1-\rho_\lambda)t.$ Since
$$\inf\limits_{x\in\mathbb{Z}^d}\left(d_x^++d_x^-\lambda\right)\lambda^{-\vert x\vert}>2d\lambda>0,$$
by \cite[Theorem 10.43]{LR-PY2016}, $\USF_\lambda$-a.s. every tree has only one end.
% \end{proof}

%To finish proving Theorem \ref{thm6.2}, we prove

%\begin{lem}
%On $\mathbb{Z}^1,$ for any $0<\lambda<1,$ ${\rm FSF}_\lambda$ is just the tree $\mathbb{Z}^1,$ and ${\rm WSF}_\lambda$ has two trees and satisfies
%\begin{eqnarray}\label{(6.13)}
%{\rm WSF}_{\lambda}\left[\mathfrak{F}=\{T_{i-1}^{-},T_i^{+}\}\right]=\frac{1}{2}(1-\lambda)\lambda^{\vert i\vert\wedge \vert i-1\vert},\ i\in\mathbb{Z}.
%\end{eqnarray}
%Here $\mathfrak{F}$ obeys ${\rm WSF}_\lambda,$ $T^{-}_{i-1}$ and $T^+_i$ are subtrees of $\mathbb{Z}^1$ with vertex sets
%$\{i-1,i-2,\cdots\}$ and $\{i,i+1,\cdots\}$ respectively.
%\end{lem}
%\pf
(ii) It remains to prove (\ref{(6.13)}). For any $n\in\mathbb{N},$ let $G_n=[-n,n]\cap\mathbb{Z}^1$ be the induced subgraph of tree $\mathbb{Z}^1,$ and $G_n^*$ the graph obtained from $G_n$ by identifying all vertices of $\mathbb{Z}^1\setminus G_n$ to a single vertex $z_n$ and deleting all the self-loops. % [The superscript ``W" stands for the wired version.]
Note $G_n^*$ is a simple cycle of length $2(n+1),$ and $z_n$ is adjacent to $n$ and $-n.$ Endow $G_n^*$ with the following edge conductance function $c_\lambda(\cdot):$
\begin{align*}
    c_\lambda(\{i-1,i\})
 &= \lambda^{-\left(\vert i\vert\wedge\vert i-1\vert\right)}, \qquad i\in [- (n-1), \, n] \cap \Z^1,
    \\
    c_\lambda(\{z_n,n\})
 &= c_\lambda(\{z_n,-n\})=\lambda^{-n}.
\end{align*}
Clearly all spanning trees of $G_n^*$ are of the form $G_n^*\setminus\{e\}$ for some edge $e$ of $G_n^*.$ Let
$$\Xi\left(G_n^*\setminus\{e\}\right)=\prod\limits_{f\in E(G_n^*)\setminus\{e\}}c_\lambda(f)=\frac{1}{c_\lambda(e)}\prod\limits_{f\in E(G_n^*)}c_\lambda(f).$$
By the definition of $\WSF$, for any $i\in\mathbb{Z}^1,$
\begin{align*}
    \WSF_\lambda\left[\mathfrak{F}= \left\{T_{i-1}^{-},T_i^{+}\right\}\right]
 &= \lim\limits_{n\rightarrow\infty}\frac{\Xi\left(G_n^*\setminus\{i-1,i\}\right)}
       {\sum\limits_{e\in E(G_n^*)}\Xi\left(G_n^*\setminus\{e\}\right)}
    =\lim\limits_{n\rightarrow\infty}\frac{c_\lambda\left(\{i-1,i\}\right)^{-1}}
          {\sum\limits_{e\in E(G_n^*)}c_\lambda\left(\{e\}\right)^{-1}}
    \\
 &= \lim\limits_{n\rightarrow\infty}\frac{\lambda^{\vert i\vert\wedge\vert i-1\vert}}
    {2\sum\limits_{k=0}^n\lambda^k}=\frac{1}{2}(1-\lambda)\lambda^{\vert i\vert\wedge\vert i-1\vert} ,
\end{align*}
%\qed\\
%\noindent{\bf So far we have proven Theorem \ref{thm6.2}.} \qed\\
\noindent as desired.
\end{proof}

\subsection{Discussions of the singularity problem}
Recall that both $\FSF$ and $\WSF$ are determinantal point processes (DPPs) on the set of all edges of a graph (\cite{LR2003}). For the singularity problem,
the following general version is false: Given an infinite countable set $E$ and any two closed subspaces $H_1$ and $H_2$ of $\ell^2(E)$ with $H_1\subsetneq H_2,$ the distributions of DPPs corresponding to $H_1$ and $H_2$ are mutually singular. See \cite{LR2003} p.~203.%\footnote{\tt (xxxx In the original version, it was "p.37".)}

\begin{thm}\label{thm6.3}
  % {\bf (i)}
  Let $G$ be any graph whose simple cycles are of uniformly bounded lengths. For any network on $G$ with positive conductances,
the corresponding $\FSF$ has only one tree.

% {\bf (ii)} Given a graph $G$ with at least 2 ends, let $(G,c)$ be a network such that the related RWs on some two disjoint connected components of some $G\setminus B_G(n)$ are transient. If lengths for simple cycles are uniformly bounded, then $\FSF$ and $\WSF$ are mutually singular due to $\WSF$ has at least two trees. Particularly, the following holds:

% {\bf (ii-1)} For spherically symmetric graph $G$ with at least two ends, when all its simple cycles are of uniformly bounded lengths, $\FSF_{\lambda}$ and $\WSF_\lambda$ are  mutually singular for any $\lambda\in (0,\lambda_c(G)).$

% {\bf (ii-2)} Given any graph $G=G_1*\cdots *G_r$ ($2\le r\in\mathbb{N}$)
% with each $G_i$ is a finite connected graph. Then for any $\lambda\in (0,\lambda_c(G)),$ $\FSF_\lambda$ has one tree and $\WSF_\lambda$ has infinitely many trees, and hence $\FSF_\lambda$ and $\WSF_\lambda$ are  mutually singular.
\end{thm}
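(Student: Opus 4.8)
The plan is to exploit the decomposition of $G$ into its blocks (maximal $2$-connected subgraphs together with bridges). First I would observe that under the hypothesis \emph{every block of $G$ is finite}. Indeed, a simple cycle is $2$-connected and hence lies inside a single block, so a block containing a cycle of length $>L$ would violate the uniform bound $L$; on the other hand an infinite, locally finite, $2$-connected graph always carries arbitrarily long cycles, since two vertices at distance $n$ are joined by two internally disjoint paths (Menger), whose union is a cycle of length $\ge 2n$, and distances are unbounded by local finiteness. Hence each block is finite. I would then recall the standard block factorization of spanning trees: a subgraph $T\subseteq G$ is a spanning tree of $G$ if and only if $T\cap B$ is a spanning tree of $B$ for every block $B$; equivalently the blocks are glued along cut vertices into a connected block--cut tree, and the spanning-tree structure factorizes over them.

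The heart of the argument is to turn this into a cylinder event. Fix a block $B$, finite by the previous step, and an exhaustion $\{G_n\}$. For all large $n$ the block $B$ is contained in $G_n$ and is again a block of $G_n$: passing to an induced subgraph only deletes edges, so the $2$-connected subgraphs of $G_n$ are $2$-connected in $G$ and thus sit inside blocks of $G$, whence a fully included block of $G$ remains a maximal block of $G_n$. Consequently $\UST_{G_n}$ restricted to the finitely many edges of $B$ is, with probability one, a spanning tree of $B$. Since $\{\,\mathfrak{F}\cap B \text{ is a spanning tree of } B\,\}$ depends on only finitely many edges, it is a cylinder event, so the weak convergence $\UST_{G_n}\Rightarrow\FSF$ yields $\FSF(\,\mathfrak{F}\cap B \text{ is a spanning tree of } B\,)=1$. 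Intersecting over the countably many blocks, $\FSF$-almost surely the sampled forest $\mathfrak{F}$ restricts to a spanning tree of every block of $G$.

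To conclude I would combine this with the fact, already recorded in the excerpt, that $\FSF$ is concentrated on acyclic essential spanning forests. On the full-measure event just produced, $\mathfrak{F}$ contains a spanning tree of each block; since the blocks cover all of $V(G)$ and $E(G)$ and are joined at cut vertices along the connected block--cut tree, the union of these block spanning trees is a connected spanning subgraph of $G$. An acyclic connected spanning subgraph is a single tree, so $\mathfrak{F}$ has exactly one tree almost surely, which is the assertion.

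The step I expect to demand the most care is the reduction to finite blocks together with the claim that a fixed block of $G$ is eventually a block of $G_n$ along the exhaustion: this is precisely where local finiteness (implicit in the standing assumptions of the paper) is used, and where one must verify that taking induced subgraphs neither merges two blocks nor destroys the $2$-connectivity of a fully included block. Once finiteness of blocks is secured, the remainder is routine — the elementary block factorization of spanning trees followed by a standard cylinder-event passage to the weak limit.
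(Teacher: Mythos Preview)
Your proof is correct and takes a genuinely different route from the paper's. The paper argues directly at the level of distances: for any spanning tree $T_n$ of the finite approximant $G_n$ and any two vertices $x,y$, it shows $d_{T_n}(x,y)\le \ell\, d_G(x,y)$ (where $\ell$ is the uniform cycle bound) by a pigeonhole argument---if the tree geodesic were longer, then interlacing it with a $G$-geodesic of length $d_G(x,y)$ would produce at most $d_G(x,y)$ simple cycles of total length exceeding $\ell\, d_G(x,y)$, forcing one cycle of length $>\ell$. Passing to the weak limit over a cylinder event then gives $d_{\mathfrak F}(x,y)<\infty$ $\FSF$-a.s.

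Your approach instead passes through the block--cut structure: the cycle bound forces every block to be finite (via Menger), spanning trees factorize over blocks, and a fixed finite block of $G$ is eventually a block of $G_n$, so the restriction of $\UST_{G_n}$ to each block is a spanning tree of that block; weak convergence and a countable intersection finish. The paper's argument is more self-contained (no Menger, no block machinery) and yields the quantitative byproduct $d_{\mathfrak F}(x,y)\le \ell\, d_G(x,y)$. Your argument is more structural and in fact proves slightly more---that $\FSF$-a.s.\ the restriction of $\mathfrak F$ to every block is a spanning tree of that block---which makes the connectivity conclusion transparent and would extend cleanly to related statements about $\FSF$ on graphs with finite blocks.
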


\noindent {\it Proof.} %{\bf Step 1.}
Let $G=(V,E)$ be any graph whose simple cycles are of uniformly bounded lengths. Given any positive conductance function $c(\cdot)$ on $E$.
%Recall the definition of ${\rm FSF}$ from \cite[Sections 4.1 and 10.1]{LR-PY2016} as follows:

%{\it Assume $G_n=(V_n,E_n)$ are finite subgraphs that exhaust $G,$ i.e., $G_n\subseteq G_{n+1}\ \mbox{and}\ G=\bigcup G_n.$
%Let $\mu_n^F$ be the (weighted) uniform spanning tree probability measure on $G_n,$ which is proportional to
%$$\Xi (T):=\prod\limits_{e\in T}c(e),\ \mbox{where}\ T\ \mbox{is a spanning tree of}\ G_n.$$
%Note $\left\{\mu_n^F\right\}_{n\geq 1}$ has a weak limit, denoted by ${\rm FSF},$ which is carried by the set of spanning forests of $G$
%and thus called the free (uniform) spanning forest measure on $G.$ And the definition of ${\rm FSF}$ does not depend on choices of $G_n$s.}

Consider the exhaustion $G_n=B_G(n)$, $n\in\mathbb{N}$, of $G$. Let $\ell\in\mathbb{N}$ be the maximal length of all simple cycles of $G.$
Given any two distinct vertices $x,y\in V.$ Choose $n_0\in\mathbb{N}$ such that $x,y\in G_n,\ \forall n\geq n_0;$ and let $d_G(x,y)$
be the graph distance between $x$ and $y$ in $G.$ Then for any $n\geq n_0$ and any spanning tree $T_n$ of $G_n,$ the distance $d_{T_n}(x,y)$ between
$x$ and $y$ in $T_n$ is at most $\ell d_G(x,y).$

{Indeed}, suppose conversely
$$d_{T_n}(x,y)\geq \ell d_G(x,y)+1,$$
and let $\gamma_1=x_0x_1\cdots x_{n_1}$ (resp. $\gamma_2=y_0y_1\cdots y_{d_G(x,y)}$) be the geodesic from $x$ to $y$ in $T_n$ (resp. $G$).
Here
$$x_0=y_0=x,\ x_{n_1}=y_{d_G(x,y)}=y,\ n_1=d_{T_n}(x,y).$$
Assume successive intersection points of $\gamma_1$ and $\gamma_2$ are
$$x_{i_0}=y_{j_0},\ x_{i_1}=y_{j_1},\ \cdots,\ x_{i_k}=y_{j_k},$$
where $i_0=0<i_1<\cdots<i_k=n_1,\ j_0=0<j_1<\cdots<j_k=d_G(x,y)$ and $1\leq k\leq d_G(x,y).$ Note for each $0\leq r\leq k-1,$
the segments of $\gamma_1$ and $\gamma_2$
between $x_{i_r}=y_{j_r}$ and $x_{i_{r+1}}=y_{j_{r+1}}$ forms a simple cycle $C_r$ in $G;$ and the total length of all these $C_r$s
is $n_1+d_G(x,y).$ Hence there is a simple cycle $C_r$ in $G$ whose length is at least
$$\frac{n_1+d_G(x,y)}{k}\geq\frac{\ell d_G(x,y)+1+d_G(x,y)}{d_G(x,y)}>\ell+1;$$
which is a contradiction to the definition of $\ell.$

Hence for any $n\geq n_0,$ $\mu_n^F$-a.s. $d_{T_n}(x,y)\leq \ell d_G(x,y),$ where $T_n$ has the law $\mu_n^F$. Taking limit $n\rightarrow\infty$, 
we have that
$$\FSF\text{-a.s.}\ T,\ d_T(x,y)\leq \ell d_G(x,y)<\infty,$$
where $T$ obeys the law $\FSF$, $d_T(x,y)$ is the graph distance between $x$ and $y$ in $T.$ This means that any two distinct vertices $x$ and
$y$ in $G$ is connected in $T$ for $\FSF$-a.s.\ $T$. Therefore, $T$ is $\FSF$-almost surely a tree, namely $\FSF$ has only one tree.
\qed\\

% {\bf Step 2.} To prove Part 2 of the theorem, note that $G$ has $k\in\mathbb{N}$ ends means that for any suitably large $n,$ $G\setminus B_G(n)$ has exactly $k$ disjoint connected components; while $G$ has infinitely many ends means that for any $k\in\mathbb{N},$ $G\setminus B_G(n)$ has at least $k$ disjoint connected components when $n$ is large enough; and graph $G_1*\cdots *G_r$ ($2\leq r\in\mathbb{N}$) possesses infinitely many ends and a recursive structure.

%  Now Part 2 of the theorem follows from \eqref{e:treenum}.
% \qed\\

\begin{remark}
  Let $(G,\, c)$ be a network such that there exists a number $n_0$ with the property that at least two disjoint components of $G \setminus B_G(n_0)$ are transient (i.e., the associated random walks are transient). By \eqref{e:treenum}, the $\WSF$ a.s. has at least two trees. If all the simple cycles have uniformly bounded lengths, then we have from Theorem~\ref{thm6.3} that the $\FSF$ and $\WSF$ are mutually singular. 
  % Given a graph $G$ with at least 2 ends, let $(G,c)$ be a network such that the related RWs on some two disjoint connected components of some $G\setminus B_G(n)$ are transient. If lengths for simple cycles are uniformly bounded, then $\FSF$ and $\WSF$ are mutually singular due to $\WSF$ has at least two trees.
\end{remark}

\flushleft{Zhan Shi\\
LPMA, Universit\'{e} Paris VI\\
4 place Jussieu, F-75252 Paris Cedex 05\\
France\\
E-mail: \texttt{zhan.shi@upmc.fr}}\\

\flushleft{Vladas Sidoravicius\\
NYU-ECNU Institute of Mathematical Sciences at NYU Shanghai\\
\& Courant Institute of Mathematical Sciences\\
New York, NY 10012, USA\\
E-mail: \texttt{vs1138@nyu.edu}}\\

\flushleft{He Song\\
  Department of Mathematical Science, Taizhou University\\
  Taizhou 225300, P. R. China\\
  Email: \texttt{tayunzhuiyue@126.com}}

\flushleft{Longmin Wang and Kainan Xiang\\
School of Mathematical Sciences, LPMC, Nankai University\\
Tianjin 300071, P. R. China\\
E-mails: \texttt{wanglm@nankai.edu.cn} (Wang)\\
\hskip 1.4cm \texttt{kainanxiang@nankai.edu.cn} (Xiang)}
\end{document}